\newcommand\nb{\nabla}
\newcommand{\beq}{\begin{equation} \begin{split}}
\newcommand{\eeq}{\end{split} \end{equation}}
\newcommand\Omg{\Omega}
\def\section{\@startsection{section}{1}\z@{.9\linespacing\@plus\linespacing}%
	{.7\linespacing} {\fontsize{13}{14}\selectfont\bfseries\centering}}
\def\paragraph{\@startsection{paragraph}{4}%
	\z@{0.3em}{-.5em}%
	{$\bullet$ \ \normalfont\itshape}}
\renewcommand\and{\qquad\text{and}\qquad}
\newcommand\sm{\setminus}
\newcommand\one{\mathbbm{1}}
\newcommand{\comm}[1]{}
\def\sfS{\mathsf{S}}
\def\sfS{\mathsf{S}}
\def\bm1{\mathbbm{1}}
\def\s{\sigma}
\def\p{\partial}
\def\one{\mathbbm{1}}
\def\omg{\omega}
\def\Re{{\rm Re}\,}
\def\arr{\rightarrow}
\def\aa{\alpha}
\def\s{\sigma}
\def\ii{{\mathsf{i}}}
\def\p{\partial}
\def\one{\mathbbm{1}}
\def\sfP{\mathsf{P}}
\def\dd{{\,\mathrm{d}}}
\def\omg{\omega}
\def\sfS{\mathsf{S}}
\def\sfP{\mathsf{P}}
\newcounter{counter_a}
\numberwithin{figure}{section}
\numberwithin{equation}{section}
\theoremstyle{plain}
\newtheorem*{thm*}{Theorem}
\newtheorem{thm}{Theorem}[section]
\newtheorem{lem}[thm]{Lemma}
\newtheorem{cor}[thm]{Corollary}
\theoremstyle{remark}
\newtheorem{remark}[thm]{Remark}
\theoremstyle{plain}
\newcommand{\beu}{\begin{equation*}}
\newcommand{\eeu}{\end{equation*}}
\newcommand{\besu}{\begin{equation*}
\begin{aligned}}
\newcommand{\eesu}{\end{aligned}
\end{equation*}}
\newcommand{\bes}{\begin{equation}
\begin{aligned}}
\newcommand{\ees}{\end{aligned}
\end{equation}}
\newcommand\cH{\mathcal H}
\newcommand\frs{\mathfrak s}
\newcommand\eps{\varepsilon}
\newcommand\ov{\overline}
\newcommand\wt{\widetilde}
\newcommand\void[1]{}
\def\ov{\overline}
\def\eps{\varepsilon}
\DeclareMathOperator\tr{tr}
\def\frs{{\mathfrak s}}
      \def\dC{{\mathbb C}}
      \def\dR{{\mathbb R}}
   \def\dZ{{\mathbb Z}}
   \def\sfB{{\mathsf B}}   
\def\sfD{{\mathsf D}}
\def\sfP{{\mathsf P}}   \def\sfQ{{\mathsf Q}}   \def\sfR{{\mathsf R}}
\def\sfS{{\mathsf S}}   \def\sfT{{\mathsf T}}
   \def\cH{{\mathcal H}}   
      \def\cO{{\mathcal O}}
\newcommand{\dom}{\mathrm{dom}\,}
\definecolor{darkred}{rgb}{0.65,0.2,0.2}
\theoremstyle{remark} 
\newtheorem{example1}[thm]{Example}
\definecolor{darkblue}{rgb}{0.2,0.2,0.6}
\newcommand{\e}{_{\varepsilon}}
\newcommand{\ke}{_{k,\varepsilon}}
\newcommand{\DD}{\mathbf{D}}
\newcommand{\BB}{\mathbf{B}}
\newcommand{\YY}{\mathbf{Y}}
\newcommand{\dist}{\mathrm{dist}\,}
\newcommand{\diam}{\mathrm{diam}\,}
\newcommand{\la}{\langle}
\newcommand{\ra}{\rangle}
\newcommand{\ceq}{\coloneqq}
\begin{document}
	
	\title[Homogenization of the Dirac operator with position-dependent mass]{Homogenization of the Dirac operator  with position-dependent mass}
	
	\author[A.~Khrabustovskyi]{Andrii Khrabustovskyi}
	\address[A.~Khrabustovskyi]{Department of Physics, Faculty of Science, University of
		Hradec Kr\'{a}lov\'{e}, Rokitansk\'eho 62, 50003 Hradec Kr\'alov\'e, Czech Republic} 
	\email{andrii.khrabustovskyi@uhk.cz}
	
	\author[V.~Lotoreichik]{Vladimir Lotoreichik}
	\address[V.~Lotoreichik]{Department of Theoretical Physics, Nuclear Physics Institute, 	Czech Academy of Sciences, Hlavn\'i 130, 25068 \v Re\v z, Czech Republic}
	\email{lotoreichik@ujf.cas.cz}

	\keywords{Dirac operator, position-dependent mass, homogenization, effective mass, norm resolvent convergence}
	\subjclass{81Q10, 35Q40, 35B27, 47A55}
	
	\begin{abstract}
		We address the homogenization of the two-dimensional Dirac operator with position-dependent mass. The mass is piecewise constant and supported on 
		small pairwise disjoint inclusions evenly distributed along an $\varepsilon$-periodic square lattice.
		Under rather general assumptions on geometry of these inclusions
		we prove that the corresponding family of Dirac operators converges as $\varepsilon\to 0$ in the norm resolvent sense to the Dirac operator with a constant
		effective mass provided the masses in the inclusions are adjusted to the scaling of the geometry. 
		We also estimate the speed of this convergence in terms of the scaling rates.
	\end{abstract}
	
	\maketitle
	\thispagestyle{empty} 
	
	\section{Introduction}
	The Dirac equation describes
	the behaviour of relativistic spin\mbox{-}$\frac12$ quantum particles. This equation plays an important role in the study of two-dimensional structures
	with honeycomb symmetries such as the artificial material graphene; see~\cite{FW12, FW14} and the references therein. The effective Dirac operator for the graphene is massless, but a non-trivial mass term can appear when the graphene sheet is deposited on another crystal
	such as hexagonal  boron-nitride~\cite{JK14, KUM12, SSZ08}. The corresponding mass term can be position-dependent, when the crystal 
	beneath the graphene sheet has  {bumps} or trenches,  {where the distance between graphene and boron-nitride increases}. Dirac operators with position-dependent mass are already considered in mathematical literature in the limit of a large mass supported in the exterior of a fixed open set~\cite{ALMR19, BBZ22, BCLS19, MOP20, SV19}. In this limit, one observes the convergence to the Dirac operator on a domain with infinite mass boundary conditions,  whose spectral properties are studied in many recent contributions (see, e.g.,~\cite{ABLO21, BLRS23, BFSV17, BK22, LO23}). The asymptotic analysis we perform in this paper  {concerns} a significantly different situation, where not only the mass is changing, but also the support of the mass is varying.         
	
	In the present paper, we will consider the two-dimensional Dirac operator  
	\begin{gather*}
		\sfD_\eps   = -\ii  (\s_1\partial_1 + \s_2\partial_2)   + m\e  \s_3 
	\end{gather*}
	acting in $L^2(\dR^2;\dC^2)$ on the domain $H^1(\dR^2;\dC^2)$,
	where $\sigma_j$  are the Pauli matrices, and $m\e$ (``mass'') is a piecewise-constant non-negative function supported on small pairwise disjoint {inclusions} $D\ke$, $k\in\dZ^2$, distributed along an $\eps$-periodic square lattice, i.e.,   each cell of the lattice contains precisely one inclusion  $D\ke$, see Figure~\ref{fig1}  {below}. 
	Hereinafter, we assume that the units are chosen in such a way   that the speed of light   and reduced Planck's constant are  equal to $1$.
	We will address the homogenization of this operator in the limit when the period $\eps$  of the lattice and the outer radii $d\ke$ of these {inclusions} tend to zero
	(with the same or with different rates), and the values $m\ke$ of the mass $m\e$ on $D\ke$ are appropriately scaled.
	
	In short, the main result of the present paper is  as follows: 
	under appropriately chosen masses $m\ke$, the operator
	$\sfD\e$  {converges} as $\eps\to 0$ to the Dirac operator  {$\sfD$} with constant effective mass $m_\star$, {which is expressed in terms of the geometry of the problem.} 
	The convergence is established in the \emph{norm resolvent sense}, and the estimate on the convergence rate is derived. We impose rather general
	assumptions on the shapes of the inclusions $D\ke$: after being upscaled to the domains with unit outer radius, they have to satisfy a uniform with respect to $\eps$ and $k$ trace inequality, while their inner radii and the smallest non-zero Neumann eigenvalues  have to be bounded away from zero uniformly in $\eps$ and $k$.   Examples obeying these assumptions will be discussed.
	The outer radii $d\ke$ of $D\ke$ are allowed to be of a different order with respect to $\eps$; 
	this lack of uniformity in the sizes of inclusions is compensated by the choice of   $ m\ke$.
	The only restriction we  {impose}  reads 
	\begin{equation}\label{eq:assumption0}
		\lim_{\eps\arr0}\frac{\eps^2}{d_\eps}
		\left(\ln\left(\frac{\eps}{d\e}\right)\right)^{1/2} = 0,\quad\text{ where }\,d\e\coloneqq\inf_{k}d\ke,
	\end{equation} 
	that is the outer radii of the inclusions cannot tend to zero too fast.
	
	The counterpart problem for scalar Schr\"odinger operators was studied by Brillard \cite{B88}. In  {that} work, the author considered the operator $-\Delta_\Omega+h\e\chi\e$ in $L^2(\Omega)$, where 
	$\Omega\subset\dR^n$, {$n\ge2$} is a bounded domain, $  {-}\Delta_\Omega$ is the Dirichlet Laplacian on
	$\Omega$, $h\e$ is a positive constant, and $\chi\e$ is the indicator function of the union of $\eps$-periodically distributed  identical domains of the form $D\ke\cong d\e D$,
	where $0<d\e\le C\eps$ and $D\subset\dR^n$ is a fixed  {open} set.
	It was shown for $n\ge 3$ that if
	$\lim_{\eps\to 0}\eps^{n/(n-2)}d\e^{-1}=0$
	and $h_\star\coloneqq\lim_{\eps\to 0} h\e|D\ke|\eps^{-n}>0$, then for each $f\in L^2(\Omega)$ one has
	\begin{gather}\label{wrc}
		(-\Delta_\Omega+ {h_\eps\chi_\eps}) ^{-1}f  \to (-\Delta_\Omega+ {h_\star})^{-1} f\,\text{ weakly in }H^1_{ {0}}(\Omega)\,\text{ as }\eps\to 0
	\end{gather}
	(due to the Rellich theorem, the above convergence holds also in the strong sense in $L^2(\Omega)$).  If $ {d\e \sim \eps^{n/(n-2)}}$, 
	the result remains qualitatively the same: the limiting operator is  a Schr\"odinger operator 
	with a constant   potential, but now this potential equals the minimimum of some 
	capacity-type functional whose form depends on how $h\e$ scales.
	The proofs are based on epi-convergence methods.
	For $n=2$ the convergence result in \cite{B88} is formulated only through certain implicit assumption 
	(no explicit restrictions similar to  {\eqref{eq:assumption0}} were given).
	However, tracing the proofs in \cite{B88}, one might conclude that   \eqref{wrc} holds for $n=2$ provided   
	\begin{gather}\label{n=2}
		\eps^2|\ln d\e|\to 0\quad\text{ as }\quad\eps\to 0.
	\end{gather}
	The   scalings $d\e\sim \eps^{n/(n-2)}$ ($n\ge 3$)  and $|\ln d\e|^{-1}\sim \eps^2$ ($n=2$)
	arise  in homogenization of Dirichlet and Robin Laplacians in domains with  holes  
	\cite{B24,CM97,K89,KP22,KP18,MK64}:
	if the holes are scaled as above, in the limit one arrives  {to} a Schr\"odinger operator with a constant 
	potential known in homogenization community by its nickname ``strange term''.

	As we see, our assumption \eqref{eq:assumption0} is more restrictive than
	\eqref{n=2}.  
	It remains an open question   whether the restriction \eqref{eq:assumption0} is of a technical nature (we will come to it when estimating the $H^1(\dR^2;\dC^2)$-norm by the graph norm associated with $\sfD\e$), or it has a principle significance, so that for smaller $d\e$ one has qualitatively different limiting behavior of the operator $\sfD\e$.
	
	The main difficulty when inspecting homogenization of a Dirac operator is connected with the fact that  it is an operator of the first order and is not semibounded, while the known methods of homogenization theory are designed primarily to deal with even order below bounded differential operators and based on them evolution equations, see, e.g., the monographs \cite{BLP11,CD99,MK06,ZKO94}. For operators not falling into these standard frameworks one needs to
	invent \textit{ad-hoc} methods. As a result, so far there exist only a couple of works devoted to homogenization of Dirac operators. 
	
	In \cite{K13} the author addressed homogenization of the two-dimensional Dirac operator   with a constant mass and a periodic magnetic potential ${\bf A}\e$ being either of the form ${\bf A}\e(x)={\bf A}(x\eps^{-1})$ (non-singular case)
	or ${\bf A}\e(x)=\eps^{-1}{\bf A}(x\eps^{-1})$ (singular case), where ${\bf A}$ is a real periodic  {divergence-free} vector-valued function
	{with zero average over the period cell.} For both cases the effective operators were found, in the non-singular case it coincides with a free Dirac operator. The convergence is established
	in the norm resolvent sense  with order-sharp error estimates.
	The scheme of the proof in \cite{K13} is as follows: first the solution to the underlying resolvent equation  is split into two scalar components, each of which solves the resolvent equation for certain second order operators; then for each component the 
	operator-theoretic approach by Birman and  Suslina \cite{BS04,BS06,S11} is applied.
	Actually, the author used the well-known fact that the square of the two-dimensional magnetic Dirac operator 
	coincides with a matrix electromagnetic Schr\"odinger operator of a special form (the so-called Pauli operator).

	We also mention the article~\cite{JL01} concerning homogenization of non-stationary systems emerging from the one-dimensional non-stationary Dirac equation. Finally, in   \cite{AS12}  $G$-convergence of a three-dimensional Dirac operator with a constant mass and an oscillating potential, being restricted to the spectral subspaces associated with the eigenvalues in the gap of the essential spectrum,  was investigated.

	In the proof of the main theorem we extensively use the abstract convergence result obtained in Section~\ref{sec:abstract} below. Its main idea is to estimate the absolute value of the difference of the bilinear forms of the operators $\sfD_\eps$ and $\sfD$ in terms of graph-norms associated to these operators. Then, when implementing   this abstract result, we combine 
	several functional estimates  
	with a convenient representation of the quadratic form for the square of the Dirac operator with piecewise constant mass~\cite{BCLS19}.  {An additional
		difficulty
		is caused by the fact that this quadratic form
		has boundary terms over $\partial D\ke$ (cf.~the formula \eqref{BCLS} below).}
	Apparently, the proposed method can be extended to homogenization of Dirac operators in three and higher space dimensions.

	\section{Setting and the main result}\label{sec:main}
	
	\subsection{Geometric setting}
	
	Let $\YY\subset\dR^2$ be the open unit square centred at the origin.
	For $k\in\dZ^2$ we set $Y\ke\ceq \eps \YY+ \eps k$.
	Assume that for each $\eps\in (0,\eps_0]$ with  $0<\eps_0<1$ we are given with a family of  {$C^{2,1}$}-smooth (connected) domains
	$\{D\ke,\ k\in \dZ^2\}$  in $\dR^2$  
	satisfying
	\begin{gather}\label{De}
		{D\ke} \subset Y\ke.
	\end{gather}
	Further assumptions on $D\ke$ will be given below, see \eqref{assump:de}--\eqref{assump:N}.
	The above geometrical configuration is presented in Figure~\ref{fig1}.
	\begin{figure}[ht]
		
		\begin{picture}(270,165)
			
			\includegraphics[scale=0.6]{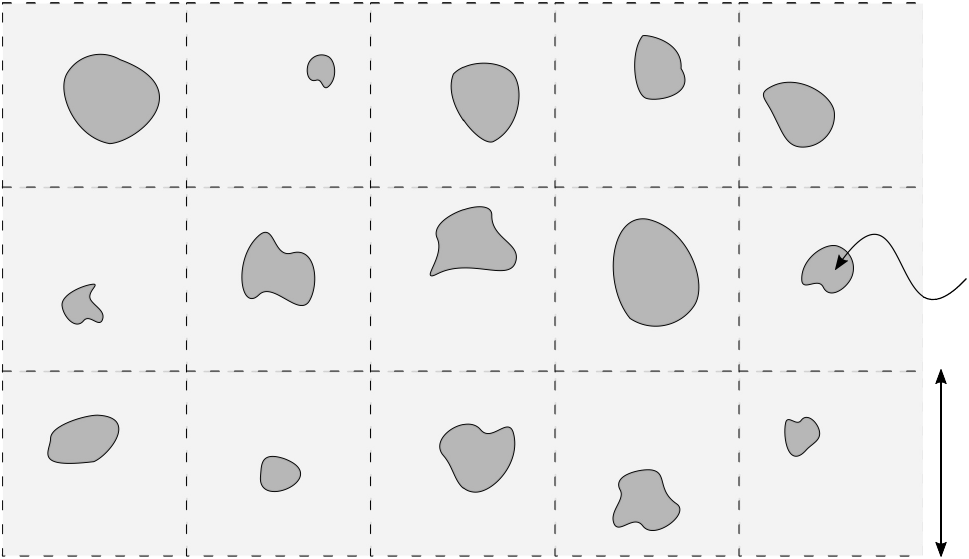} 
			
			\put(-7,25){$\eps$}
			\put(-5,83){$D\ke$}
			\put(-63,60){$Y\ke$}
			
		\end{picture}
		
		\caption{The union of 
			dark grey sets $D\ke$ constitutes the set  supporting the mass.
			\label{fig1}}
	\end{figure}

	Let $d\ke>0$ stand for  the radius of the smallest disk containing $D\ke$. 
	For simple geometric reasons, we have the inequality 
	\begin{gather}\label{22}
		d\ke \le \frac{\eps}{\sqrt{2}}.
	\end{gather}
	The first assumption we impose on $D\ke$ reads as follows:
	\begin{enumerate}[$\bullet$]
		\item For fixed $\eps\in (0,\eps_0]$ the numbers $d\ke$ are bounded 
		away from zero uniformly in $k$, i.e.
		\begin{gather}
			\label{assump:de}
			\forall\, \eps\in (0,\eps_0]\colon\quad    d\e\ceq \inf_{k\in\dZ^2}d\ke>0.
		\end{gather}
	\end{enumerate}
	Further assumptions   will be formulated
	in terms of the
	upscaled sets 
	$\DD\ke\ceq d\ke^{-1}D\ke$.
	Note that the  radius of the smallest disk containing $\DD\ke$ equals $1$.
	In the following, by
	$\Lambda_{\rm N}(\Omega)>0$ we denote  the smallest non-zero eigenvalue of the Neumann Laplacian on
	a  bounded connected open set   $\Omega\subset\dR^2$ with Lipschitz boundary.
	The next assumptions are as follows:
	\begin{enumerate}[$\bullet$]
		\item
		The inner radii of the sets $\DD\ke$ are bounded away from zero uniformly in $\eps$ and $k$, i.e.
		\begin{gather}\label{assump:inrad}
			\exists\,\rho\in (0,1)\ \forall\, \eps\in (0,\eps_0]\ \forall\, k\in\dZ^2\ \exists\, x\ke\in \DD\ke:\quad
			\big\{x\in\dR^2\colon\ |x-x\ke|<\rho  \big\}\subset \DD\ke.
		\end{gather}

		\item The   operators mapping $u\in H^1(\DD\ke)$ to its trace 
		on $\partial\DD\ke$ are bounded in the $H^1\to L^2$ operator norm uniformly in $\eps$ and $k$, i.e.
		\begin{gather}\label{assump:tr}
			\exists\, C_{\rm tr}>0\ \forall\, \eps\in (0,\eps_0]\ \forall\, k\in\dZ^2\ \forall\,
			u\in H^1(\DD\ke)\colon\quad
			\|u\|_{L^2(\partial \DD\ke)}\leq C_{\rm tr}\|u\|_{H^1(\DD\ke)}
		\end{gather}
		(to simplify the presentation, hereinafter we use the same notation for an $H^1$-function on a domain and for its trace on the boundary of this domain).
		
		\item  The  {smallest non-zero} Neumann eigenvalues $\Lambda_{\rm N}(\DD\ke)$ are bounded  away from zero uniformly in $\eps$ and $k$, i.e.
		\begin{gather}\label{assump:N}
			\exists\, \Lambda_{\rm N}>0\ \forall\, \eps\in (0,\eps_0]\ \forall\, k\in\dZ^2\colon\quad
			\Lambda_{\rm N}(\DD\ke)\geq \Lambda_{\rm N}.
		\end{gather}

	\end{enumerate}
	
	Evidently,
	assumptions \eqref{assump:inrad}--\eqref{assump:N} hold
	if 
	the sets $\DD\ke$ are identical (up to a rigid motion).
	While searching for more subtle examples, 
	the main difficulties arise when checking the most implicit 
	assumptions \eqref{assump:tr} and \eqref{assump:N}.
	Analysing the standard proof of the trace inequality
	$\|u\|_{L^2(\partial \Omega)}\leq C\|u\|_{H^1(\Omega)}$ (see, e.g., \cite[\S 5.5, Theorem 1]{E10}), which
	is based on a local straightening of the boundary of a domain,
	one can easily show that the assumption \eqref{assump:tr} holds if, e.g.,
	each $\DD\ke$ possesses a global tubular coordinates for a $\delta$-neighborhood of $\partial \DD\ke$
	(with $\delta$ being independent of $\eps$ and $k$) and the  curvatures of $\partial  {\DD}\ke$ are bounded
	uniformly in $\eps$ and $k$.
	Next, we discuss three examples aiming to realize the assumption
	\eqref{assump:N}.
	
	\begin{example1}
		If  $\Omega\subset\dR^2$ is a  convex domain, then one has the Payne-Weinberger bound \cite{PW60}  $$\Lambda_{\rm N}(\Omega)\geq \frac{\pi^2}{(\diam  \Omega)^2}.$$
		Since the  radius of the smallest disk containing $\DD\ke$ is equal to $1$, we get $\diam \DD\ke\leq 2$. Hence, if the sets $\DD\ke$ are convex, then
		the assumption \eqref{assump:N} holds true with $\Lambda_{\rm N}=\frac{\pi^2}{4}$.
		
	\end{example1}
	
	\begin{example1}
		If $\Omega\subset\dR^2$ is a smooth domain, which is
		{strictly} star-shaped {with} respect to a point $p\in \Omega$, then one has the following inequality by Bramble and Payne \cite{BP62}:
		\begin{gather}\label{BrPa}
			{\Lambda_{\rm N}}(\Omega)\ge \frac{R_{\min}(\Omega) h(\Omega)}
			{(R_{\max}(\Omega))^2\big[(R_{\max}(\Omega))^2+R_{\min}(\Omega)h(\Omega)\big]} .
		\end{gather}
		Here 
		$R_{\min}(\Omega)\ceq\min\limits_{x \in\partial \Omega} {\rm dist}(x,p)$,
		$R_{\max}(\Omega)\ceq\max\limits_{x \in\partial \Omega} {\rm dist}(x,p)$,
		$h(\Omega)\ceq \min\limits_{x\in\partial \Omega}\la  x-p,\nu {(x)}\ra _{\dR^2}$
		with 
		$\nu$ being the outward unit normal to 
		$\partial \Omega$;   the {strict} star-shaped form of $\Omega$ implies $\la  x-p,\nu {(x)}\ra _{\dR^2}>0$. 
		It follows from \eqref{BrPa} that if the upscaled sets $\DD\ke$ are {strictly}   star-shaped with respect to some points $p\ke\in \DD\ke$, moreover, there is $\rho>0$ and $h>0$ (independent of $\eps$ and $k$) such that 
		\begin{gather}\label{C0}
			\big\{x\in\dR^2\colon |x-p\ke|<\rho  \big\}\subset \DD\ke\text{\quad and\quad }
			\min_{x\in\partial  {\DD}\ke}\la  x-p\ke,\nu {(x)}\ra _{\dR^2}\geq h,
		\end{gather}
		then \eqref{assump:N}  holds with 
		$\Lambda_{\rm N}=\frac{\rho h}{32}$;
		to estimate the denominator in \eqref{BrPa} we use  the inequalities 
		$$
		R_{\min}(\DD\ke)\leq R_{\max}(\DD\ke)\leq \diam \DD\ke\leq 2,\qquad
		\left.\la  x-p\ke,\nu {(x)}\ra _{\dR^2}\right|_{x\in\partial \DD\ke}\leq \diam \DD\ke\le 2.
		$$
		The second assumption in~\eqref{C0} means that the distance from the point $p\ke$ to \emph{any}  tangential line to  
		$\partial \DD\ke$  is bounded away from zero uniformly in $\eps$ and $k$.
	\end{example1}
	
	\begin{example1}
		For general domains $\Omega\subset\dR^2$ (i.e, neither convex nor star-shaped) one can use, e.g.,
		the following estimate by Brandolini, Chiacchio, and Trombetti \cite[Theorem 1.1]{BCT15}:
		\begin{gather}
			{\Lambda_{\rm N}}(\Omega)\ge \frac{(K(\Omega))^2\Lambda_{\rm D}(\Omega^\sharp)}{2\pi} ,
		\end{gather}
		where $\Lambda_{\rm D}(\Omega^\sharp)$ is the smallest eigenvalue of the Dirichlet Laplacian 
		on the disk $\Omega^\sharp\subset\dR^2$ satisfying $ {|\Omega^\sharp|=|\Omega|}$  {(hereinafter the notation $|\Omega|$ stands for the area of a  domain $\Omega$)}, and $K(\Omega)  {> 0}$ is
		the best isoperimetric constant relative to $\Omega$
		defined as in~\cite[Eq. (1.3)]{BCT15}. Since the radius of the smallest ball containing
		$\DD\ke$ equals $1$, then the Dirichlet eigenvalues 
		$ {\Lambda_{\rm D}}(\DD\ke^\sharp)$ are bounded from below by the first Dirichlet eigenvalue
		of the unit disk. Therefore the assumption \eqref{assump:N} holds if the best isoperimetric constants $K(\DD\ke)$ are bounded from below uniformly in $\eps$ and $k$.
		
	\end{example1}

	\subsection{Dirac operator with position-dependent mass and the main result}
	
	Let us recall that the Pauli matrices are defined by
	\[
	\s_1 := \begin{pmatrix} 0 & 1\\
		1&0\end{pmatrix},\qquad \s_2 := \begin{pmatrix} 0& -\ii\\
		\ii &0\end{pmatrix},\qquad\s_3 := \begin{pmatrix} 
		1&0\\
		0&-1\end{pmatrix}.
	\]
	For a vector ${x} = (x_1,x_2)\in\dR^2$ we use the the abbreviation
	$\s\cdot{x} = \s_1x_1 + \s_2x_2$. Based on this definition the differential expression $\s\cdot\nb = \s_1\p_1+\s_2\p_2$ is well defined.
	For a fixed $m_\star > 0$, we introduce the numbers
	\begin{gather}\label{me}
		m\ke := m_\star\frac{|Y\ke|}{|D\ke|} =		
		m_\star\frac{\eps^2}{d\ke^2|\DD\ke|},\quad \eps\in (0,\eps_0],\ k\in\dZ^2.
	\end{gather} 
	We study the following $\eps$-dependent family of Dirac operators acting in the space $L^2(\dR^2;\dC^2)$:
	\begin{gather*}
		\sfD_\eps u := -\ii(\s\cdot\nabla) u  + m\e\s_3 u,\qquad \dom\sfD_\eps := H^1(\dR^2;\dC^2).
	\end{gather*}
	Here $m\e$ is the piecewise constant function
	being given by
	$$m\e\ceq\sum_{k\in \dZ^2} m\ke {\chi_{\ke}},
	$$
	where $\chi\ke$ stands for the characteristic function of $D\ke$;
	note that, by virtue of \eqref{assump:de}--\eqref{assump:inrad}, one has $m\e\in L^\infty(\dR^2)$
	(namely, $m\e\leq \frac{m_\star\eps^2}{d\e^2\pi\rho^2}$), whence
	the operator above is well-defined.
	
	The Dirac operator $\sfD_\eps$ is self-adjoint in the Hilbert space $L^2(\dR^2;\dC^2)$ as a bounded perturbation of the self-adjoint free Dirac operator.
	We would like to address the following question.
	
	\smallskip
	
	\begin{center}
		{\it What is the limit of $\sfD_\eps$ as $\eps\arr 0  $ in the norm resolvent sense?}
	\end{center}
	\smallskip
	
	\noindent The candidate for the limit is the following Dirac operator with constant mass
	\[
	\sfD u := -\ii(\sigma\cdot\nabla) u + m_\star\s_3 u,\qquad \dom\sfD := H^1(\dR^2;\dC^2).
	\]
	We prove this convergence under additional assumption on $d_\eps$.
	The main result of the present paper is the following theorem;     note that the logarithm $\ln (\frac{\eps}{d\e})>0$ appearing in this theorem   is positive due to \eqref{22}.
	
	\begin{thm}\label{thm:convergence1} 
		Assume that 
		\begin{equation}\label{eq:assumption}
			\lim_{\eps\arr0}\frac{\eps^2}{d_\eps}
			\left(\ln\left(\frac{\eps}{d\e}\right)\right)^{1/2} = 0.
		\end{equation}
		Then $\sfD_\eps$ converges to $\sfD$ in the norm resolvent sense as $\eps\arr0$ and the following bound on the norm of the difference of their resolvents
		\begin{gather}\label{NRC}
			\big\|(\sfD_\eps - \ii)^{-1} - (\sfD - \ii)^{-1}\big\| \le C {\frac{\eps^2}{d_\eps}
				\left(\ln\left(\frac{\eps}{d\e}\right)\right)^{1/2}}
		\end{gather}
		holds. The constant $C > 0$ in \eqref{NRC}  depends only on $\eps_0$, $m_\star$,  $\rho,\   C_{\rm tr}$, and $\Lambda_{\rm N}$.
	\end{thm}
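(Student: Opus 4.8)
The plan is to reduce the norm-resolvent estimate to a bilinear-form comparison and then feed it into the abstract convergence theorem of Section~\ref{sec:abstract}. Concretely, for $f\in L^2(\dR^2;\dC^2)$ set $u_\eps\ceq(\sfD_\eps-\ii)^{-1}f$ and $u\ceq(\sfD-\ii)^{-1}f$. Since $\sfD_\eps-\sfD=(m\e-m_\star)\s_3$ acts as multiplication, the resolvent difference is controlled by
\[
\big\|(\sfD_\eps-\ii)^{-1}-(\sfD-\ii)^{-1}\big\|\le \sup_{\|f\|=\|g\|=1}\big|\big((m\e-m_\star)\s_3\,u_\eps,\,v\big)_{L^2}\big|,
\]
where $v\ceq(\sfD^*+\ii)^{-1}g=(\sfD-\ii)^{-1}g$ (or its adjoint counterpart), so the whole task is to estimate the functional
\[
\ell_\eps(w,z)\ceq\int_{\dR^2}\big(m\e(x)-m_\star\big)\,(\s_3 w,z)_{\dC^2}\,\mathrm{d}x
=\sum_{k\in\dZ^2}\Big(m\ke\!\!\int_{D\ke}\!\!(\s_3 w,z)\,\mathrm{d}x-m_\star\!\!\int_{Y\ke}\!\!(\s_3 w,z)\,\mathrm{d}x\Big)
\]
for $w,z$ in the form domain, where the second equality uses the crucial identity $m\ke|D\ke|=m_\star|Y\ke|$ built into the definition \eqref{me}. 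This identity is the whole point: on each cell the average of $m\e$ equals $m_\star$, so $\ell_\eps(w,z)$ measures only the \emph{oscillation} of $w,z$ across the cell $Y\ke$, which is small because $u_\eps,v$ have controlled $H^1$-norms.

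The main work is a cellwise Poincaré-type estimate. On a fixed cell, write $\bar w\ke$ for the mean of $w$ over $Y\ke$; then the contribution of cell $k$ to $\ell_\eps$ can be rewritten, using $m\ke|D\ke|=m_\star|Y\ke|$, as a sum of terms each containing a factor $w-\bar w\ke$ or $z-\bar z\ke$. On $Y\ke$ the standard Poincaré inequality gives $\|w-\bar w\ke\|_{L^2(Y\ke)}\le C\eps\|\nabla w\|_{L^2(Y\ke)}$; on the inclusion $D\ke$ one rescales to $\DD\ke$ and combines the trace inequality \eqref{assump:tr} with the Neumann–Poincaré inequality governed by $\Lambda_{\rm N}(\DD\ke)\ge\Lambda_{\rm N}$ and the inner-radius bound $\rho$ from \eqref{assump:inrad}–\eqref{assump:N} to control $\int_{D\ke}(w-\bar w\ke)$ in terms of $d\ke\|\nabla w\|_{L^2(D\ke)}$ plus the appropriate cell oscillation — here the scaling factor $m\ke=m_\star\eps^2/(d\ke^2|\DD\ke|)$ produces the ratio $\eps^2/d\ke$ after the change of variables, and summing over $k$ with Cauchy–Schwarz in $\ell^2(\dZ^2)$ turns the cellwise $\eps\,\|\nabla w\|_{L^2(Y\ke)}$ and $d\ke\,\|\nabla w\|_{L^2(D\ke)}$ factors into $\eps^2 d\e^{-1}$ times $\|\nabla w\|_{L^2(\dR^2)}\|\nabla z\|_{L^2(\dR^2)}$, i.e.\ times a product of graph-norms of $\sfD_\eps$ and $\sfD$.

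The logarithmic factor $(\ln(\eps/d\e))^{1/2}$ enters precisely at the point flagged in the introduction: to close the argument one must bound the $H^1(\dR^2;\dC^2)$-norm of $u_\eps$ by the graph norm $\|\sfD_\eps u_\eps\|+\|u_\eps\|$. Because the mass $m\e$ is large ($m\e\le m_\star\eps^2 d\e^{-2}\pi^{-1}\rho^{-2}$) on the thin inclusions, squaring $\sfD_\eps$ produces — via the quadratic-form representation from \cite{BCLS19} — boundary terms over $\partial D\ke$, and controlling the $L^2(\partial D\ke)$ trace against the bulk $H^1$-norm costs a $\ln(\eps/d\e)$ through a logarithmic trace/Sobolev estimate on the annulus $Y\ke\setminus D\ke$ (the two-dimensional capacity of a small disk). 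Thus $\|u_\eps\|_{H^1}\le C(\ln(\eps/d\e))^{1/2}(\|f\|+\dots)$, and multiplying by the $\eps^2/d\e$ already obtained yields exactly the bound \eqref{NRC}; the hypothesis \eqref{eq:assumption} guarantees the right-hand side tends to $0$, giving norm-resolvent convergence. I expect this last $H^1$-versus-graph-norm step — tracking the boundary contributions in the form of $\sfD_\eps^2$ and extracting the sharp logarithm uniformly in $k$ and $\eps$ from \eqref{assump:de}–\eqref{assump:N} — to be the principal obstacle; the form-comparison and Poincaré parts are more routine once the averaging identity \eqref{me} is exploited.
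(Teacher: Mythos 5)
Your overall architecture coincides with the paper's: reduce the resolvent difference to the bilinear form of $(m\e-m_\star)\s_3$ (this is exactly the abstract Theorem~\ref{thm:abstract}), exploit the averaging identity $m\ke|D\ke|=m_\star|Y\ke|$, and then convert $H^1$-norms into graph norms. Your Step~1 bound of order $\eps^2/d\e$ is indeed obtainable by the cellwise argument you sketch (against full $H^1\times H^1$ norms; gradients alone cannot suffice, since the form does not vanish when one argument is locally constant). The genuine gap is in your last step: the claim $\|u_\eps\|_{H^1}\le C(\ln(\eps/d\e))^{1/2}\big(\|\sfD\e u_\eps\|_{L^2}+\|u_\eps\|_{L^2}\big)$ is asserted, not proved, and it does not follow from the ``logarithmic trace estimate on the annulus'' you invoke. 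In the identity \eqref{BCLS} the sign-indefinite boundary terms carry the large factor $m\ke\sim\eps^2/d\ke^2$; estimating $m\ke\int_{\partial D\ke}|v|^2\dd\s$ by scaled trace inequalities (on $D\ke$, or through the annulus and then on $R\ke$) produces a coefficient of order $d\e^{-1}$ in front of $\|v\|^2_{L^2}$, not a logarithm. Even the paper's finer treatment --- splitting off $\la v\ra_{D\ke}$, using the exact cancellation $\int_{\partial D\ke}\nu\,\dd\s=0$ for the constant part (formula \eqref{eq:xi}), and absorbing the oscillating part via the Robin bound \eqref{eq:Robin_weak} together with Lemma~\ref{lemma:6} --- only yields $\|v\|^2_{H^1}\lesssim\|\sfD\e v\|^2_{L^2}+C\eps^2 d\e^{-2}\|v\|^2_{L^2}$ (cf.~\eqref{De:final}); nothing like a purely logarithmic loss is in sight, and whether your stronger bound is even true is unclear. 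Your sketch does not address the two ingredients that make this step work (the cancellation for the mean part and the Robin-eigenvalue absorption) at all.

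This mislocation of the logarithm is not cosmetic: with your log-free form constant $c\sim\eps^2/d\e$ and the graph-norm constant that is actually achievable, $b\sim\eps^2/d\e^2$, the abstract scheme gives only $\cO(\eps^3/d\e^2)$, which is weaker than \eqref{NRC} and does not even tend to zero under \eqref{eq:assumption} (take $d\e\sim\eps^{3/2}$); with the naive $b\sim d\e^{-1}$ it is worse still. The paper closes the argument the other way around: the logarithm is harvested in the \emph{form} estimate, by comparing the means $\la f\ra_{D\ke}$ and $\la f\ra_{Y\ke}$ through the chain $D\ke\subset B\ke\subset R\ke\subset\wt Y\ke$ (the annulus step, Lemma~\ref{lemma:3}, is precisely the two-dimensional capacity logarithm), which gives $|\frs_\eps[u,v]|\le C\eps(\ln(\eps/d\e))^{1/2}\|u\|_{H^1}\|v\|_{H^1}$, so that only the factor $\eps/d\e$ must be paid in the graph-norm step. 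To repair your proposal you must either actually prove your much stronger $H^1$-versus-graph-norm claim, or sharpen your Step~1 to the paper's $\eps(\ln(\eps/d\e))^{1/2}$ bound via the mean-value comparison lemmas; as written, the two halves do not multiply to \eqref{NRC}.
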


	Applying the above theorem in the special case  $d\ke=d_\eps  \sim  C\eps^\varkappa$ with ${\varkappa} \in [1,2)$ and $C>0$ (for $\varkappa=1$ this constant has to be sufficiently small  in order to fit \eqref{De}), we arrive at the rate of convergence $\cO(\eps^{ {2-\varkappa}}|\ln\eps|^{1/2})$
	for $\varkappa \in (1,2)$ and the rate of convergence $\cO(\eps)$ for $\varkappa = 1$. 
	Our analysis does not cover the situation $\varkappa\ge 2$.
	In the special case $d\ke=d_\eps   \sim C  \eps^2|\ln \eps|^\omg$ with $\omg >  {\frac12}$ we still get norm resolvent convergence, but the convergence speed is very slow $\cO(|\ln\eps|^{ {1/2-\omg}})$. It remains an open question whether assumption~\eqref{eq:assumption} is necessary for the norm resolvent convergence
	and whether our convergence rate is sharp in $\eps$.
	
	In the above theorem we do not claim $d\ke$ are of the same order. On the contrary, they  can be absolutely different, e.g.
	$d\ke= {\frac12}\eps^{\varkappa_{k,\eps}}$ with some $\varkappa\ke\in [1,\varkappa]$, $1\le\varkappa<2$.
	The main result  is ensured  owing to the  
	choice of  the masses $m\ke$ being adjusted according to \eqref{me}.
	
	Our main result implies, in particular, the  convergence of spectra. Note that 
	the spectrum $\sigma(\sfD)$ of the operator $\sfD$ is purely absolutely continuous and coincides with the set $\dR\setminus (-m_\star,m_\star)$, whence, in particular,  the operator $\sfD$
	is invertible, and 
	\begin{gather}\label{gap}
		\sigma(\sfD^{-1})=[-m_\star^{-1},m_\star^{-1}].
	\end{gather}
	
	Recall that
	for compact sets $X,Y\subset\dC$  the \emph{Hausdorff distance} $\mathrm{dist}_{\rm H} (X,Y)$ is given by
	\begin{gather*}
		\mathrm{dist}_{\rm H} (X,Y)=\max\left\{\sup_{x\in X} \inf_{y\in Y}|x-y|,\,\sup_{y\in Y} \inf_{x\in X}|y-x|\right\}.
	\end{gather*}	
	We also denote by  $\eta\e$  the right-hand side of the estimate   \eqref{NRC}.
	
	\begin{cor}\label{coro}
		Assume that $\eps >0 $ is sufficiently small in order to have 
		\begin{gather}\label{eta:bound}
			\eta\e\leq \frac{1-(1+m_\star^2)^{-1/2}}{1+(1+m_\star^2)^{1/2}}. 
		\end{gather}
		Then   the  operator $\sfD\e$ is invertible with a bounded inverse, and one has the estimate 
		\begin{gather}\label{Haus}
			{\mathrm{dist}_{\rm H}}\big(\sigma( \sfD\e^{-1}),[-m_\star^{-1},m_\star^{-1}]\big)\leq 
			\wt C \frac{\eps^2}{d_\eps}\left(
			\ln\left(\frac{\eps}{d\e}\right)\right)^{1/2},\quad \wt C>0\text{ is a constant}.
		\end{gather}
	\end{cor}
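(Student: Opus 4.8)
To prove Corollary~\ref{coro}, the plan is to deduce it from Theorem~\ref{thm:convergence1}, using as a bridge the elementary fact that for two bounded self-adjoint operators $A,B$ one has $\mathrm{dist}_{\rm H}(\sigma(A),\sigma(B))\le\|A-B\|$ (if $\mathrm{dist}(\mu,\sigma(A))>\|A-B\|$ then $\|(A-\mu)^{-1}(B-A)\|<1$, so a Neumann series shows $\mu\notin\sigma(B)$, and one argues symmetrically). Thus it is enough to prove, under~\eqref{eta:bound}, that $\sfD\e$ is boundedly invertible and that $\|\sfD\e^{-1}-\sfD^{-1}\|\le c\,\eta\e$ with an explicit $c>0$ depending only on $m_\star$; since $\eta\e$ is by definition the right-hand side of~\eqref{NRC}, this together with~\eqref{gap} gives~\eqref{Haus}. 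As Theorem~\ref{thm:convergence1} controls the resolvents at the point $\ii$, I would pass from $\sfD\e^{-1}$ to $(\sfD\e-\ii)^{-1}$ through the elementary operator identity explained next.

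Write $R\e\ceq(\sfD\e-\ii)^{-1}$, $R\ceq(\sfD-\ii)^{-1}$, so that Theorem~\ref{thm:convergence1} reads $\|R\e-R\|\le\eta\e$. Since $\sfD\e-\ii$ is a bijection of $\dom\sfD\e$ onto $L^2(\dR^2;\dC^2)$ with inverse $R\e$, we have $\sfD\e=R\e^{-1}(1+\ii R\e)$; as $(1+\ii R\e)^{-1}$, whenever it exists, commutes with $R\e$, one checks that $\sfD\e$ is boundedly invertible precisely when $1+\ii R\e$ is, and then $\sfD\e^{-1}=R\e(1+\ii R\e)^{-1}=(1+\ii R\e)^{-1}R\e$ (and likewise for $\sfD$ and $R$). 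Using $\sigma(\sfD)=\dR\setminus(-m_\star,m_\star)$ and self-adjointness of $\sfD$ one gets the exact value $\|R\|=1/\mathrm{dist}(\ii,\sigma(\sfD))=(1+m_\star^2)^{-1/2}<1$, so by the Neumann series $1+\ii R$ is boundedly invertible with $\|(1+\ii R)^{-1}\|\le(1-\|R\|)^{-1}=(1+m_\star^2)^{1/2}/((1+m_\star^2)^{1/2}-1)$. Here the assumption~\eqref{eta:bound} is tailor-made: since $(1+m_\star^2)^{-1/2}+\frac{1-(1+m_\star^2)^{-1/2}}{1+(1+m_\star^2)^{1/2}}=\frac{2}{1+(1+m_\star^2)^{1/2}}$, it forces $\|R\e\|\le\|R\|+\eta\e\le\frac{2}{1+(1+m_\star^2)^{1/2}}<1$ (using $m_\star>0$), whence $1+\ii R\e$ is boundedly invertible too, with $\|(1+\ii R\e)^{-1}\|\le(1-\|R\e\|)^{-1}\le(1+(1+m_\star^2)^{1/2})/((1+m_\star^2)^{1/2}-1)$; in particular $\sfD\e$ is boundedly invertible.

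It remains to estimate the difference. Since $(1+\ii R\e)^{-1}$ commutes with $R\e$ and $(1+\ii R)^{-1}$ with $R$, the identity $R\e(1+\ii R)-(1+\ii R\e)R=R\e-R$ gives the resolvent-type formula
\[
\sfD\e^{-1}-\sfD^{-1}=(1+\ii R\e)^{-1}R\e-R(1+\ii R)^{-1}=(1+\ii R\e)^{-1}(R\e-R)(1+\ii R)^{-1},
\]
and hence $\|\sfD\e^{-1}-\sfD^{-1}\|\le\|(1+\ii R\e)^{-1}\|\,\|R\e-R\|\,\|(1+\ii R)^{-1}\|\le c\,\eta\e$ with $c=(1+m_\star^2)^{1/2}(1+(1+m_\star^2)^{1/2})/((1+m_\star^2)^{1/2}-1)^2$; combining this with the Hausdorff-distance inequality above and~\eqref{gap} yields~\eqref{Haus}. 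I expect the only genuinely delicate point to be that Theorem~\ref{thm:convergence1} on its own gives no a priori lower bound on $\mathrm{dist}(0,\sigma(\sfD\e))$: this must be manufactured, and the precise role of the smallness hypothesis~\eqref{eta:bound} is exactly to keep the auxiliary bounded operator $1+\ii R\e$ invertible with a bound on its inverse depending only on $m_\star$. The operator identity, the Neumann series, and the passage to spectra are then routine.
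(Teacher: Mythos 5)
Your proposal is correct and follows essentially the same route as the paper: bounded invertibility of $\sfD\e$ from the smallness hypothesis \eqref{eta:bound} via $\|(\sfD\e-\ii)^{-1}\|\le(1+m_\star^2)^{-1/2}+\eta\e$, a sandwich identity reducing $\sfD\e^{-1}-\sfD^{-1}$ to the resolvent difference at $\ii$, the bound $\mathrm{dist}_{\rm H}(\sigma(\sfS),\sigma(\sfT))\le\|\sfS-\sfT\|$ for bounded self-adjoint (normal) operators, and \eqref{gap}. Indeed, since $(1+\ii(\sfD\e-\ii)^{-1})^{-1}=-\ii(\sfD\e^{-1}+\ii)$, your factorization is literally the paper's identity, and your Neumann-series invertibility argument is equivalent to the paper's lower bound on $\dist(0,\sigma(\sfD\e))$, with the Hausdorff inequality proved by hand instead of citing \cite{HN99}.
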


	\begin{proof}
		Using the triangle inequality, the estimate \eqref{NRC} and the fact that for any normal operator $\sfT$ and $\lambda\in\rho(\sfT)$ one has 
		$\|(\sfT-\lambda)^{-1}\|=(\dist(\lambda,\sigma(\sfT)))^{-1}$, we obtain
		\begin{align}\notag
			{ \dist(0,\sigma(\sfD\e))}&\geq    
			{\dist(\ii,\sigma(\sfD\e))-\dist(\ii,0)} =\|(\sfD\e-\ii)^{-1}\|^{-1}-1 
			\geq  \left(\eta\e + \|(\sfD-\ii)^{-1}\| \right)^{-1}-1\\&\notag 
			=
			\left(\eta\e+(\dist(\ii,\sigma(\sfD)))^{-1}\right)^{-1}-1=
			\left(\eta\e+(1+m_\star^2)^{-1/2}\right)^{-1}-1\\&\geq ((1+m_\star^2)^{1/2}-1)/2  \label{dist0}
		\end{align}
		(the penultimate step in \eqref{dist0} relies on $\sigma(\sfD)=\dR\setminus (-m_\star,m_\star)$, while
		the last inequality follows from the assumption \eqref{eta:bound}).
		By \eqref{dist0}, the operator $\sfD\e^{-1}$ is well defined and bounded.
		
		For normal bounded operators $\sfS$, $\sfT$ in the Hilbert space $\mathcal{G}$ one has  {by~\cite[Lemma A.1]{HN99}} the inequality
		$
		{\mathrm{dist}_{\rm H}}(\sigma(\sfS),\sigma(\sfT))\leq\|\sfS-\sfT\|
		$.
		Applying it for $\sfS= \sfD\e ^{-1}$, $\sfT= \sfD ^{-1}$ we get
		\begin{gather}\label{Haus:i}
			{\mathrm{dist}_{\rm H}}\big(\sigma(  \sfD\e ^{-1}),\sigma(  \sfD ^{-1})\big)\leq 
			\| \sfD\e ^{-1}- \sfD ^{-1}\|. 
		\end{gather}
		Then the desired estimate \eqref{Haus} follows from  \eqref{NRC}, \eqref{gap}, \eqref{Haus:i}, the identity
		$$
		\sfD\e ^{-1}- \sfD ^{-1}=-\left(\sfD\e^{-1}+\ii\right) 
		\left((\sfD\e-\ii)^{-1}-(\sfD-\ii)^{-1}\right)
		\left(    \sfD^{-1}+\ii\right) ,
		$$
		and the fact that, by \eqref{dist0}, 
		the norms $\|\sfD\e^{-1}\|$ are uniformly bounded with respect to $\eps$.
	\end{proof}
	
	\begin{remark}
		In the special case when the mass $m_\eps$ is the indicator function of the union
		of $\eps$-periodically distributed identical domains $D_{k,\eps} \cong \eps D$, where $D\subset{\bf Y}$ is a fixed $C^{2,1}$-smooth (connected) open set, we have $d_\eps\sim C\eps$ and we get from Theorem~\ref{thm:convergence1} that
		\[	
		\big\|(\sfD_\eps - \ii)^{-1} - (\sfD-\ii)^{-1}\big\| = \cO(\eps),\qquad \eps\arr0.
		\]
		In this setting, one can also check that $\sfD_\eps^2$ is a special case of the second-order matrix
		elliptic
		operator considered in~\cite{S11}, thanks to 
		very weak regularity assumptions on the coefficients there. It follows then from
		\cite[Theorem 9.2]{S11} that
		\[	
		\big\|(\sfD_\eps^2 + 1)^{-1} - (\sfD^2+1)^{-1}\big\| = \cO(\eps),\qquad \eps\arr0.
		\]
		The general scheme in~\cite{S11} does not apply to the non-periodic case and also to the case when $d_\eps = o(\eps)$ considered in the present paper.
	\end{remark}
	
	The rest of the paper is organized as follows.   
	In Section~\ref{sec:abstract} we will present the above mentioned abstract result.
	In Section~\ref{sec:aux} we will prove several auxiliary functional estimates.
	The proof of the main  result  will be carried out in Section~\ref{sec:proof}.

	
	\section{Abstract scheme}\label{sec:abstract}
	In this section, we obtain an abstract result, which is useful in studying convergence of not necessarily semibounded self-adjoint operators acting in a Hilbert spaces.  
	Other abstract results for studying resolvent convergence of unbounded operators 
	in Hilbert spaces can be found, e.g., in the monographs~\cite{P12, RS80, W00}. Our abstract condition  is close to the approach developed by Post in~\cite{P06,P12}. Similar strategy for the proof of the norm resolvent convergence (applied in a particular setting of Dirac operators) is implicitly used, e.g., in~\cite[ {Proof of Theorem 1.4}]{DM24}.
	However, we are not aware of any source, where the abstract condition for the norm resolvent convergence as in the theorem below is contained, and we believe that this abstract formulation can be useful in other applications. 
	\begin{thm}\label{thm:abstract}
		Let $\sfD$ and $\wt\sfD$ be self-adjoint operators in a Hilbert space $\cH$.
		Assume also that for all $u\in\dom\sfD$ and all $v\in\dom\wt\sfD$
		\begin{equation}\label{eq:assumption2}
			\big|(\sfD u,  v)_\cH - (u,\wt\sfD v)_{\cH}\big|\le 
			c\left(a\|u\|_\cH^2 + \|\sfD u\|^2_\cH\right)^{1/2}
			\left(b\|v\|_{\cH}^2 + \|\wt\sfD v\|^2_{\cH}\right)^{1/2},
		\end{equation}
		with some $a,b,c>0$.
		Then 
		\[
		\big\|(\wt\sfD - \ii)^{-1} - (\sfD-\ii)^{-1}\big\|\le c\sqrt{(a+1)(b+1)}.
		\]
	\end{thm}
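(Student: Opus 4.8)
The plan is to test the operator $R \ceq (\wt\sfD - \ii)^{-1} - (\sfD - \ii)^{-1}$ against arbitrary $f,g\in\cH$ and to massage the pairing $(Rf,g)_\cH$ into the exact shape $(\sfD u, v)_\cH - (u,\wt\sfD v)_\cH$ that appears in the hypothesis~\eqref{eq:assumption2}. Since $\sfD$ and $\wt\sfD$ are self-adjoint, $\ii$ belongs to the resolvent set of both, and the resolvents are everywhere defined and bounded. I would fix $f,g\in\cH$ and set
$$u \ceq (\sfD - \ii)^{-1} f \in \dom\sfD, \qquad v \ceq (\wt\sfD + \ii)^{-1} g \in \dom\wt\sfD.$$
The $+\ii$ in the second resolvent is the crucial choice: because $\big((\wt\sfD - \ii)^{-1}\big)^\ast = (\wt\sfD^\ast + \ii)^{-1} = (\wt\sfD + \ii)^{-1}$, one has the identity $\big((\wt\sfD - \ii)^{-1} f, g\big)_\cH = (f,v)_\cH$.

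Next I would expand the two terms of $(Rf,g)_\cH$ using $(\sfD - \ii)u = f$ and $(\wt\sfD + \ii)v = g$. The first term becomes $(f,v)_\cH = (\sfD u, v)_\cH - \ii(u,v)_\cH$, and the second becomes $\big((\sfD - \ii)^{-1} f, g\big)_\cH = (u,g)_\cH = (u,\wt\sfD v)_\cH - \ii(u,v)_\cH$, with the same scalar multiple of $(u,v)_\cH$ occurring in both (this is insensitive to which slot the inner product is taken linear in). Subtracting, the $\ii(u,v)_\cH$ contributions cancel and one is left with
$$\big(Rf,g\big)_\cH = (\sfD u, v)_\cH - (u,\wt\sfD v)_\cH,$$
so \eqref{eq:assumption2} gives $|(Rf,g)_\cH| \le c\,\big(a\|u\|_\cH^2 + \|\sfD u\|_\cH^2\big)^{1/2}\big(b\|v\|_\cH^2 + \|\wt\sfD v\|_\cH^2\big)^{1/2}$.

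To close, I would invoke the self-adjointness identities $\|(\sfD - \ii)u\|_\cH^2 = \|\sfD u\|_\cH^2 + \|u\|_\cH^2 = \|f\|_\cH^2$ and $\|(\wt\sfD + \ii)v\|_\cH^2 = \|\wt\sfD v\|_\cH^2 + \|v\|_\cH^2 = \|g\|_\cH^2$, together with the crude bounds $a\|u\|_\cH^2 + \|\sfD u\|_\cH^2 \le (a+1)\big(\|u\|_\cH^2 + \|\sfD u\|_\cH^2\big) = (a+1)\|f\|_\cH^2$ and, likewise, $b\|v\|_\cH^2 + \|\wt\sfD v\|_\cH^2 \le (b+1)\|g\|_\cH^2$. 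This yields $|(Rf,g)_\cH| \le c\sqrt{(a+1)(b+1)}\,\|f\|_\cH\|g\|_\cH$ for all $f,g\in\cH$, and taking the supremum over $\|f\|_\cH = \|g\|_\cH = 1$ gives $\|R\| \le c\sqrt{(a+1)(b+1)}$, which is the assertion.

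There is no genuine analytic obstacle here — the argument is a few lines of Hilbert-space bookkeeping. The single delicate point is the placement of the signs of $\ii$: pairing $g$ with $(\wt\sfD + \ii)^{-1}$, rather than naively with $(\wt\sfD - \ii)^{-1}$, is exactly what makes the two $\ii(u,v)_\cH$ terms cancel and leaves the symmetric mismatch $(\sfD u, v)_\cH - (u,\wt\sfD v)_\cH$ on which \eqref{eq:assumption2} is built. It is also worth emphasizing that neither $u$ nor $v$ need lie in the domain of the other operator, which is precisely why the hypothesis is formulated as a bound on this mismatch and not on something like $\big((\sfD - \wt\sfD)u,v\big)_\cH$.
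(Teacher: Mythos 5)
Your proposal is correct and is essentially the paper's own argument: the same choice $u=(\sfD-\ii)^{-1}f$, $v=(\wt\sfD+\ii)^{-1}g=\big((\wt\sfD-\ii)^{-1}\big)^{*}g$, the same cancellation leading to $\big((\wt\sfD-\ii)^{-1}f-(\sfD-\ii)^{-1}f,\,g\big)_\cH=(\sfD u,v)_\cH-(u,\wt\sfD v)_\cH$, and the same bound via $\|u\|_\cH^2+\|\sfD u\|_\cH^2=\|f\|_\cH^2$ (the paper phrases this through $\|(\sfT\pm\ii)^{-1}\|\le1$, $\|\sfT(\sfT\pm\ii)^{-1}\|\le1$). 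No gaps.
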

	\begin{proof}
		Let us introduce a shorthand notation for the resolvents: $\sfR := (\sfD -\ii)^{-1}$ and $\wt\sfR := (\wt\sfD -\ii)^{-1}$.
		Let $f,g\in\cH$ be arbitrary and define $u := \sfR f$ and $v := \wt\sfR^* g$. We find a convenient representation for the bilinear form of
		the operator $\wt\sfR -\sfR$:
		\[
		\begin{aligned}
			\big((\wt\sfR  - \sfR)f,g\big)_{\cH}
			&= (f,\wt\sfR^* g)_{\cH} - (\sfR f,g)_{\cH}\\
			&= ((\sfD -\ii) u, v)_{\cH} - (u,(\wt\sfD+\ii) v)_{\cH}\\
			&
			= (\sfD u, v)_{\cH} - ( u,\wt\sfD v)_{\cH},\\
		\end{aligned}
		\]
		From this formula we get using~\eqref{eq:assumption2} 	\begin{align}\notag
			\big|\big((\wt\sfR  - \sfR)f,g\big)_{\cH}\big|&\le 
			c\left(a\|\sfR f\|^2_{\cH} + \|\sfD \sfR f\|^2_{\cH}\right)^{1/2} 
			\left(b\|\wt\sfR^* g\|^2_{\cH} + \|\wt\sfD \wt\sfR^* g\|^2_{\cH}\right)^{1/2}\\ 
			&\le c\sqrt{(a+1)(b+1)}\|f\|_\cH\|g\|_{\cH},\label{eq:limit}
		\end{align}
		from which the desired estimate immediately follows.
		In the last step of~\eqref{eq:limit}, we used   the estimates
		$\|(\sfT\pm\ii)^{-1}\|\le 1$ and $\|\sfT(\sfT\pm\ii)^{-1}\|\le 1$ 
		holding for any self-adjoint operator $\sfT$ in a Hilbert space.
		These estimates are direct consequences of the spectral theorem.
	\end{proof}

	\section{Auxiliary estimates}\label{sec:aux}
	
	In this section we collect several functional estimates which will be employed   in the next section for the proof of Theorem~\ref{thm:convergence1}. Actually, we will make use only of  the bounds \eqref{est:LambdaS}, \eqref{eq:Robin_weak}, \eqref{Neumann:Edelta}, the equalities \eqref{minmax:N}, \eqref{minmax:R}, \eqref{muEdelta}
	and Lemmata~\ref{lemma:5}, \ref{lemma:6}, while 
	\eqref{Robin:Edelta}, \eqref{trace:Edelta} and Lemmata~\ref{lemma:1}--\ref{lemma:4} are required only for the proof of Lemmata~\ref{lemma:5}, \ref{lemma:6}.
	\smallskip
	
	\subsection{Estimates for principal Neumann, Robin, and Steklov-type eigenvalues}
	In the first subsection we recall some known properties of the Neumann, Robin and Steklov-type eigenvalues, and also discuss  related inequalities.

	Let  $\Omega\subset\dR^2$ be a  bounded connected open set with Lipschitz boundary.
	Recall that by $\Lambda_{\rm N}(\Omega) > 0$  we  denote the first non-zero eigenvalue of the Neumann Laplacian on $\Omega$.
	This eigenvalue admits the   variational characterisation
	\begin{gather}\label{minmax:N}
		\Lambda_{\rm N}(\Omega) = \inf_{\begin{smallmatrix}f\in H^1(\Omega)\sm\{0\}\\ (f,\one)_{L^2(\Omega)} = 0\end{smallmatrix}}\frac{\|\nabla f\|^2_{L^2(\Omega)}}{\|f\|^2_{L^2(\Omega)}},
	\end{gather}
	where $\one$ denotes the characteristic function of $\Omega$.
	
	Along with $\Lambda_{\rm N}(\Omega)$ we also introduce 
	the number $\Lambda_{\rm S}(\Omega)>0$ via 
	\begin{gather*}
		\Lambda_{\rm S} (\Omega) = \inf_{\begin{smallmatrix}f\in H^1(\Omega)\sm\{0\}\\ (f,\one)_{L^2(\Omega)} = 0\end{smallmatrix}}\frac{\|\nabla f\|^2_{L^2(\Omega)}}{\|f\|^2_{L^2(\partial \Omega)}}.
	\end{gather*}
	It is known that  {$\Lambda_{\rm S}(\Omega) $} is  the smallest non-zero eigenvalue of a Steklov-type spectral problem on $\Omega$ specified, e.g., in~\cite[Section 2]{GS08} in a slightly different notation. {We prefer to use the term `Steklov-type', because this spectral problem differs from the conventional Steklov spectral problem~\cite{GP17}.} From \eqref{minmax:N} and the above definition of $\Lambda_{\rm S}(\Omega)$ one can easily infer the inequality
	\begin{gather}\label{est:LambdaS}
		\Lambda_{\rm S} (\Omega)\geq \frac1{ {(C_{\rm tr}(\Omega))^2} (1+(\Lambda_{\rm N}(\Omega))^{-1})}.
	\end{gather}
	Hereinafter, the notation $C_{\rm tr}(\Omega)$ stands for 
	the norm of the trace operator $H^1(\Omega)\to L^2(\partial \Omega)$.
	
	Finally, for $\gamma\in\dR$ we define the following number $\Lambda_{\rm R}^\gamma(\Omega)$:
	\begin{gather}\label{minmax:R}
		\Lambda_{\rm R}^\gamma(\Omega) = \inf_{f\in H^1(\Omega)\sm\{0\}}\frac{\|\nabla f\|^2_{L^2(\Omega)}+\gamma\|f\|^2_{L^2(\partial \Omega)}}{\|f\|^2_{L^2(\Omega)}},
	\end{gather}
	which is nothing, but   the smallest  eigenvalue of the Laplacian on 
	$\Omega$ subject to the Robin conditions $\frac{\partial  f}{\partial   \nu} + \gamma f=0$ on $\partial \Omega$ with $\nu$ being the  unit normal  pointing outwards of $\Omega$.
	One has the following estimate \cite[Eq.~15]{GS08}
	(keep in mind, that in \cite{GS08} Steklov-type eigenvalues are defined in a different way, below we reformulate their result using our notations):
	\begin{equation}\label{eq:Robin_weak}
		\Lambda_{\rm R}^\gamma(\Omega) \geq   \gamma\frac{|\p \Omega|}{|\Omega|}\left(1-
		\sqrt{-\frac{\gamma}{\Lambda_{\rm S}(\Omega)}}\right)^{-2}\ \text{ provided } -\Lambda_{\rm S}(\Omega)<\gamma<0.
	\end{equation}
	
	We are particularly interested in how the  eigenvalues $\Lambda_{\rm N}(\Omega)$ and $\Lambda_{\rm R}^\gamma(\Omega) $ are influenced under a domain rescaling. 
	Let $\delta>0$ and the set $\Omega_\delta$ be congruent to $\delta \Omega$. 
	It follows easily from \eqref{minmax:N} and \eqref{minmax:R} that   
	\begin{gather}\label{muEdelta}
		\Lambda_{\rm N}( \Omega_\delta ) = \delta^{-2}\Lambda_{\rm N}(\Omega),\quad
		\Lambda_{\rm R}^\gamma(\Omega_\delta) = \delta^{-2}\Lambda_{\rm R}^{\delta\gamma}(\Omega).
	\end{gather}
	Equalities \eqref{muEdelta} imply  the estimates
	\begin{align}\label{Neumann:Edelta}
		\forall\, f\in H^1(\Omega_\delta),\, (f,\one)_{L^2( {\Omega_\delta})}=0\colon&  
		\|f\|_{L^2 (\Omega_\delta)}^2\leq  (\Lambda_{\rm N}(\Omega))^{-1}\delta^2 \|\nabla f\|_{L^2(\Omega_\delta)}^2,
		\\\label{Robin:Edelta}
		\forall f\in H^1(\Omega_\delta)\colon&  
		\|f \|^2_{L^2(\Omega_\delta)}\! \leq\!
		(\Lambda_{\rm R}^1(\Omega))^{-1}\left(\delta\|f \|_{L^2(\partial \Omega_\delta)}^2+\delta^2\|\nabla f\|^2_{L^2(\Omega_\delta)}\right).
	\end{align} 
	
	The last inequality we introduce in this subsection is as follows,
	\begin{align}\label{trace:Edelta}
		\forall f\in H^1(\Omega_\delta):& \quad \|f\|^2_{L^2(\partial \Omega_\delta)}\leq 
		(C_{\rm tr}(\Omega))^2\left(\delta^{-1}\|f\|^2_{L^2(\Omega_\delta)}+\delta\|\nabla f\|^2_{L^2(\Omega_\delta)}\right).
	\end{align}
	The bound \eqref{trace:Edelta} follows easily from
	the   inequality
	$\|f\|^2_{L^2(\partial \Omega)}\leq (C_{\rm tr}(\Omega))^2\big(\|f\|^2_{L^2(\Omega)}+\|\nabla f\|^2_{L^2(\Omega)}\big)$ and the relation $\Omega_\delta\cong\delta \Omega$.

	\subsection{Functional estimates}
	Now, we proceed to the estimates being directly related to our problem.
	Note that all the functions $f$ below  are scalar.
	
	In the following,
	by $\la f\ra _{\Omega}$ we will denote the mean value of a function $f$ in  an open bounded set $\Omega\subset\dR^2$, i.e.
	\begin{equation*}
		\la f\ra_{\Omega}=|\Omega|^{-1}\int_{\Omega}f(x)\dd x,
	\end{equation*}
	where as before $|\Omega|$ stands for the area of $\Omega$. 
	We will keep the same notation for the mean value of a function $f$ on
	a closed curve $S\subset\dR^2$, i.e.
	\begin{equation*}
		\la f\ra_{S}=|S|^{-1}\int_{S}f(x)\dd \s,
	\end{equation*}
	where $\dd \s$ is the density of the surface measure on $S$,
	$|S|=\int_S \dd \s$ stands for the length of $S$.
	
	We also introduce several sets:
	\begin{itemize} \setlength\itemsep{0.4em}
		
		\item $\BB$ is the unit disk  centred at the origin.
		
		\item $B\ke $ is the smallest disk containing  $D\ke$ 
		(i.e. $B\ke$ is congruent to the set $d\ke \BB$).
		
		\item $R\ke$ is the disk of the radius $\eps$ being concentric with $B\ke$.
		
		\item $\wt Y\ke=3\eps \YY + \eps k$, i.e. $\wt Y\ke$
		is obtained from $Y\ke$ by a homothety with the center at $\eps k$ and the ratio $3$.
		
	\end{itemize}
	Taking into account \eqref{22},
	we conclude (cf.~Figure~\ref{fig2}):
	\begin{gather}
		\label{enclo}
		{D\ke}\subset B\ke\subset R\ke\subset \wt Y\ke.
	\end{gather}

	\begin{figure}[ht]
		
		\begin{picture}(225,160)
			
			\includegraphics[scale=0.75]{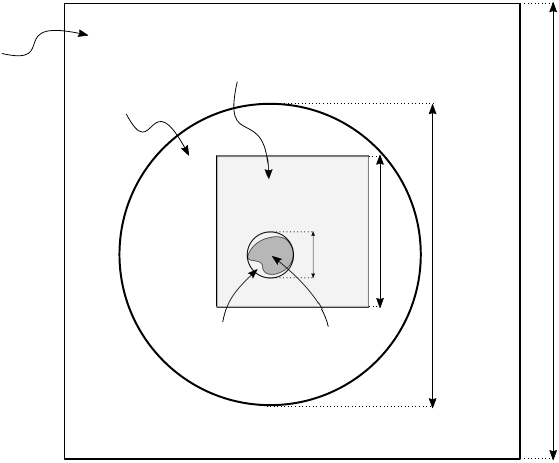} 
			
			\put(-1,82){$_{3\eps}$}
			\put(-64,82){$_\eps$}
			\put(-45,74){$_{2\eps}$}
			\scalebox{0.75}{\put(-118,99){$_{2d\ke}$}}
			
			\put(-124,143){$ {_{Y\ke}}$}
			\put(-212,148){$_{\wt Y\ke}$}
			\put(-163,131){$_{R\ke}$}
			\put(-127,45){$_{B\ke}$}
			\put(-87,44){$_{D\ke}$}
			
		\end{picture}
		
		\caption{The domains being involved in the proof of the main result {of this section.}}\label{fig2}
	\end{figure}

	\begin{lem}\label{lemma:1}
		One has:
		\begin{align}
			\label{lemma:1:est2}
			\forall\, f\in H^1(B\ke)\colon&\quad
			\big|\la  f \ra _{\partial B\ke} - \la  f\ra _{B\ke}\big|^2 \le 
			\frac{(C_{\rm tr}(  \BB))^2((\Lambda_{\rm N}(\BB))^{-1}+1)}{2\pi} \|\nb f\|^2_{L^2(B\ke)}, 
			\\
			\label{lemma:1:est3}
			\forall\, f\in H^1(R\ke)\colon&\quad
			\big|\la  f \ra _{\partial R\ke} - \la  f\ra _{R\ke}\big|^2 \le 
			\frac{(C_{\rm tr}(  \BB))^2((\Lambda_{\rm N}(\BB))^{-1}+1)}{2\pi} \|\nb f\|^2_{L^2(R\ke)}.
		\end{align} 
	\end{lem}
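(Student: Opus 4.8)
The plan is to reduce both estimates \eqref{lemma:1:est2} and \eqref{lemma:1:est3} to a single inequality on the unit disk $\BB$ and then to exploit the conformal (scale) invariance of the Dirichlet integral in dimension two. Since $B\ke$ is congruent to $d\ke\BB$ and $R\ke$ is congruent to $\eps\BB$, it suffices to establish
\begin{equation}\label{eq:lem1unit}
\forall\, g\in H^1(\BB)\colon\quad \big|\la g\ra_{\partial\BB}-\la g\ra_{\BB}\big|^2 \le \frac{(C_{\rm tr}(\BB))^2\big((\Lambda_{\rm N}(\BB))^{-1}+1\big)}{2\pi}\,\|\nb g\|^2_{L^2(\BB)}
\end{equation}
and then to pull it back by the substitution $g(y)=f(\delta y+c)$, where $c\in\dR^2$ is the centre of the disk in question and $\delta\in\{d\ke,\eps\}$ its radius. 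Under this change of variables the normalised means $\la\cdot\ra_{\BB}$, $\la\cdot\ra_{\partial\BB}$ are carried to $\la\cdot\ra_{B\ke}$, $\la\cdot\ra_{\partial B\ke}$ (and likewise with $R\ke$), while in two dimensions one has the identity $\|\nb f\|^2_{L^2(\Omega_\delta)}=\|\nb g\|^2_{L^2(\BB)}$, the Jacobian factor $\delta^2$ cancelling the factor $\delta^{-2}$ produced by the chain rule. This cancellation is precisely what keeps the constant in \eqref{lemma:1:est2}--\eqref{lemma:1:est3} free of $\eps$ and $k$.

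To prove \eqref{eq:lem1unit}, I would put $h\ceq g-\la g\ra_{\BB}$, so that $(h,\one)_{L^2(\BB)}=0$, $\nb h=\nb g$, and $\la g\ra_{\partial\BB}-\la g\ra_{\BB}=\la h\ra_{\partial\BB}$. The Cauchy--Schwarz inequality together with $|\partial\BB|=2\pi$ gives
\[
\big|\la h\ra_{\partial\BB}\big|^2=\Big|\frac1{2\pi}\int_{\partial\BB}h\,\dd\sigma\Big|^2\le\frac1{2\pi}\,\|h\|^2_{L^2(\partial\BB)}.
\]
Next I would bound the trace norm by $\|h\|^2_{L^2(\partial\BB)}\le(C_{\rm tr}(\BB))^2\big(\|h\|^2_{L^2(\BB)}+\|\nb h\|^2_{L^2(\BB)}\big)$, which is the defining property of $C_{\rm tr}(\BB)$, and then invoke the Poincar\'e inequality $\|h\|^2_{L^2(\BB)}\le(\Lambda_{\rm N}(\BB))^{-1}\|\nb h\|^2_{L^2(\BB)}$, which is exactly the variational characterisation \eqref{minmax:N} applied to the mean-zero function $h$. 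Chaining the three displays yields \eqref{eq:lem1unit}, and the lemma follows by the rescaling described above.

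The argument is a routine concatenation of standard inequalities, so I do not expect a genuine obstacle. The one point that deserves care is the two-dimensional scale invariance of the Dirichlet energy used in the first paragraph: it is what prevents any power of $d\ke$ or $\eps$ from entering the constants, and it is special to $n=2$ — in $n\ge3$ the same computation would leave a surplus factor $\delta^{\,n-2}$, which would alter the nature of the estimate (and is, incidentally, the source of the different critical scalings in the Schr\"odinger case discussed in the introduction).
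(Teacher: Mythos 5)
Your proposal is correct and follows essentially the same route as the paper: the paper applies Cauchy--Schwarz to the boundary mean of $f-\la f\ra_{B\ke}$, then the trace and Neumann--Poincar\'e inequalities in their already rescaled forms \eqref{trace:Edelta}, \eqref{Neumann:Edelta} on $B\ke$ (resp.\ $R\ke$), which is the same chain of estimates you run on the unit disk before pulling back. The only cosmetic difference is where the rescaling is performed, and your remark on the two-dimensional scale invariance of the Dirichlet energy correctly identifies why the constant is independent of $d\ke$, $\eps$ and $k$.
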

	
	\begin{proof}
		Using \eqref{Neumann:Edelta}, \eqref{trace:Edelta},
		we obtain
		\begin{align}
			\label{Poincare:De}
			\forall f\in H^1(B\ke)\colon& \quad \|f-\la  f\ra _{B\ke}\|_{L^2(B\ke)}^2
			\leq (\Lambda_{\rm N}(\BB)) ^{-1}  d\ke^2\|\nabla f\|_{L^2(B\ke)}^2,
			\\
			\label{trace:De}
			\forall f\in H^1(B\ke)\colon& \quad\|f \|_{L^2(\partial B\ke)}^2\leq 
			(C_{\rm tr}(\BB))^2
			\left(d\ke^{-1}\|  f\|_{L^2(B\ke)}^2+d\ke\|\nabla f\|_{L^2(B\ke)}^2\right).
		\end{align}
		Combining \eqref{trace:De}, \eqref{Poincare:De} and the Cauchy-Schwarz inequality we arrive at the estimate \eqref{lemma:1:est2}:
		\begin{align*} 
			\big|\la  f \ra _{\partial B\ke} - \la  f\ra _{ B\ke}\big|^2&=
			\big|\la  f  - \la  f\ra _{ B\ke}\ra _{\partial B\ke} \big|^2\le
			(2\pi d\ke)^{-1}\|f- \la  f\ra _{ B\ke} \|^2_{L^2(\partial B\ke)}
			\\ 
			&\leq (C_{\rm tr}(\BB))^2(2\pi d\ke)^{-1}
			\left(d\ke^{-1}\|f- \la  f\ra _{B\ke} \|^2_{L^2(B\ke)}+d\ke\|\nabla f \|^2_{L^2(B\ke)}\right)
			\\
			&\leq (C_{\rm tr}(\BB))^2((\Lambda_{\rm N}(\BB))^{-1}+1)(2\pi )^{-1} \|\nabla f \|^2_{L^2(B\ke)}.
		\end{align*}
		The proof of the estimate \eqref{lemma:1:est3} is similar.
	\end{proof}

	\begin{lem}\label{lemma:2}
		One has:
		\begin{equation}\label{lemma:2:est}
			\forall\, f\in H^1(B\ke)\colon\quad
			\big|\la  f \ra _{D\ke} - \la  f\ra _{B\ke}\big|^2 \le 
			\frac{1}{\Lambda_{\rm N}(\BB)\pi\rho^2} \|\nb f\|^2_{L^2(B\ke)}.
		\end{equation}
	\end{lem}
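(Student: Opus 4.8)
The plan is to compare the mean of $f$ over the inclusion $D\ke$ with the mean over the enclosing disk $B\ke$ by passing through the constant $\la f\ra_{B\ke}$ and estimating $f - \la f\ra_{B\ke}$ in $L^2(D\ke)$. First I would write
$
\big|\la f\ra_{D\ke} - \la f\ra_{B\ke}\big|
= \big|\la f - \la f\ra_{B\ke}\ra_{D\ke}\big|
\le |D\ke|^{-1/2}\,\|f - \la f\ra_{B\ke}\|_{L^2(D\ke)}
\le |D\ke|^{-1/2}\,\|f - \la f\ra_{B\ke}\|_{L^2(B\ke)},
$
using the Cauchy--Schwarz inequality on the average and the inclusion $D\ke\subset B\ke$ from \eqref{enclo}. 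Squaring gives
$
\big|\la f\ra_{D\ke} - \la f\ra_{B\ke}\big|^2 \le |D\ke|^{-1}\,\|f - \la f\ra_{B\ke}\|^2_{L^2(B\ke)}.
$

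Next I would invoke the Poincaré-type bound \eqref{Poincare:De} established inside the proof of Lemma~\ref{lemma:1}, namely
$\|f - \la f\ra_{B\ke}\|^2_{L^2(B\ke)} \le (\Lambda_{\rm N}(\BB))^{-1} d\ke^2 \|\nabla f\|^2_{L^2(B\ke)}$,
which holds because $B\ke$ is congruent to $d\ke\BB$ and $\la f - \la f\ra_{B\ke}\ra_{B\ke}=0$, so that \eqref{Neumann:Edelta} applies with $\Omega=\BB$, $\delta = d\ke$. Combining the two displays yields
$
\big|\la f\ra_{D\ke} - \la f\ra_{B\ke}\big|^2 \le \frac{d\ke^2}{\Lambda_{\rm N}(\BB)\,|D\ke|}\,\|\nabla f\|^2_{L^2(B\ke)}.
$

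Finally I would bound the prefactor $d\ke^2/|D\ke|$. Recall $|D\ke| = d\ke^2|\DD\ke|$, so $d\ke^2/|D\ke| = |\DD\ke|^{-1}$; by the inner-radius assumption \eqref{assump:inrad} the upscaled set $\DD\ke$ contains a disk of radius $\rho$, hence $|\DD\ke| \ge \pi\rho^2$ and therefore $d\ke^2/|D\ke| \le (\pi\rho^2)^{-1}$. Substituting this into the previous estimate gives exactly \eqref{lemma:2:est}. I do not anticipate any genuine obstacle here; the only point requiring a moment's care is keeping track of the scaling $B\ke\cong d\ke\BB$ so that \eqref{Neumann:Edelta} is applied with the correct $\delta$, and recognizing that the ratio $d\ke^2/|D\ke|$ is scale-invariant and controlled solely by the uniform inner-radius bound $\rho$.
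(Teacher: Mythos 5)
Your proposal is correct and follows the paper's own proof essentially verbatim: Cauchy--Schwarz on the average over $D\ke$, enlarging the $L^2$-norm from $D\ke$ to $B\ke$, the Poincar\'e bound \eqref{Neumann:Edelta} on $B\ke\cong d\ke\BB$, and the bound $|\DD\ke|\ge\pi\rho^2$ from \eqref{assump:inrad}. No gaps.
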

	
	\begin{proof}
		Since $B\ke\cong d\ke \BB$,
		one has by \eqref{Neumann:Edelta}:
		\begin{align}
			\label{Poincare:Be}
			\forall\, f\in H^1(B\ke)\colon & \quad \|f-\la  f\ra _{B\ke}\|^2_{L^2(B\ke)}\leq 
			(\Lambda_{\rm N}(\BB))^{-1}d\ke^2 \|\nabla f\|^2_{L^2(B\ke)}.
		\end{align} 
		Using \eqref{Poincare:Be} and the Cauchy-Schwarz inequality, we obtain
		the desired estimate \eqref{lemma:2:est}:
		\begin{align*}
			\big|\la  f \ra _{D\ke} - \la  f\ra _{B\ke}\big|^2
			&=
			\big|\la  f - \la  f\ra _{B\ke}\ra _{D\ke} \big|^2
			\le
			|D\ke|^{-1}\|f-\la  f\ra _{B\ke}\|^2_{L^2(D\ke)}
			\\
			&\leq 
			|D\ke|^{-1}\|f-\la  f\ra _{B\ke}\|^2_{L^2(B\ke)}
			\le (\Lambda_{\rm N}(\BB))^{-1}|D\ke|^{-1}d\ke^2\|\nabla f\|^2_{L^2(B\ke)}
			\\
			&=
			(\Lambda_{\rm N}(\BB))^{-1}|\DD\ke|^{-1}\|\nabla f\|^2_{L^2(B\ke)}\leq
			( \Lambda_{\rm N}(\BB)\pi\rho^2)^{-1}\|\nabla f\|^2_{L^2(B\ke)},
		\end{align*}
		where in the last step we used \eqref{assump:inrad}.
	\end{proof}

	\begin{lem}\label{lemma:3}
		One has:
		\begin{equation}\label{lemma:3:est}
			\forall\, f\in H^1(R\ke \setminus B\ke)\colon\quad
			\big|\la  f \ra _{\partial R\ke} - \la  f\ra _{\partial B\ke}\big|^2 \le 
			(2\pi)^{-1} {\cdot \ln\left( \tfrac{\eps}{d\ke}\right) }\cdot \|\nb f\|^2_{L^2(R\ke\setminus  {B\ke})}.
		\end{equation}
	\end{lem}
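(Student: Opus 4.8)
The plan is to pass to polar coordinates $(r,\theta)$ centred at the common centre of the concentric disks $B\ke$ and $R\ke$, whose radii are $d\ke$ and $\eps$ respectively; note $d\ke<\eps$ by \eqref{22}, so that $R\ke\setminus B\ke$ is genuinely the annulus $\{d\ke<r<\eps\}$. Since the arc-length element on the circle of radius $r$ is $r\,\dd\theta$ while its length is $2\pi r$, the weights cancel and
\[
\la f\ra_{\partial R\ke}-\la f\ra_{\partial B\ke}
=\frac1{2\pi}\int_0^{2\pi}\bigl(f(\eps,\theta)-f(d\ke,\theta)\bigr)\,\dd\theta .
\]
It suffices to prove \eqref{lemma:3:est} for $f\in C^\infty(\overline{R\ke\setminus B\ke})$: both sides are continuous in the $H^1(R\ke\setminus B\ke)$-topology — the left-hand side because the trace operators onto $\partial B\ke$ and $\partial R\ke$ are bounded, and these traces coincide with the boundary values of the polar representation — so the general case follows by density.

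For smooth $f$ I would use the fundamental theorem of calculus along each ray, $f(\eps,\theta)-f(d\ke,\theta)=\int_{d\ke}^{\eps}\partial_r f(r,\theta)\,\dd r$, followed by the Cauchy--Schwarz inequality with the weight $r$:
\[
\bigl|f(\eps,\theta)-f(d\ke,\theta)\bigr|^2
\le\Bigl(\int_{d\ke}^{\eps}\frac{\dd r}{r}\Bigr)\Bigl(\int_{d\ke}^{\eps}|\partial_r f(r,\theta)|^2\,r\,\dd r\Bigr)
=\ln\!\Bigl(\tfrac{\eps}{d\ke}\Bigr)\int_{d\ke}^{\eps}|\partial_r f(r,\theta)|^2\,r\,\dd r .
\]
Averaging over $\theta$ with the Cauchy--Schwarz (Jensen) inequality for the probability measure $\tfrac{\dd\theta}{2\pi}$ gives
\[
\bigl|\la f\ra_{\partial R\ke}-\la f\ra_{\partial B\ke}\bigr|^2
\le\frac1{2\pi}\int_0^{2\pi}\bigl|f(\eps,\theta)-f(d\ke,\theta)\bigr|^2\,\dd\theta
\le\frac{\ln(\eps/d\ke)}{2\pi}\int_0^{2\pi}\!\!\int_{d\ke}^{\eps}|\partial_r f(r,\theta)|^2\,r\,\dd r\,\dd\theta .
\]
Finally, using $|\partial_r f|^2\le|\partial_r f|^2+r^{-2}|\partial_\theta f|^2=|\nb f|^2$ and recognising $r\,\dd r\,\dd\theta$ as the planar area element, the last double integral is at most $\|\nb f\|^2_{L^2(R\ke\setminus B\ke)}$, which is exactly \eqref{lemma:3:est}.

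The argument is elementary and there is no genuine obstacle. The only points deserving a word of care are the identification of the polar boundary values with the $H^1$-traces on $\partial B\ke$ and $\partial R\ke$ (needed so that the smooth-function estimate passes to the limit) and the validity of the fundamental theorem of calculus along almost every ray — both standard facts for $H^1$-functions on an annulus, which is why I would carry out the computation for smooth $f$ and conclude by density. (This is the planar analogue of the familiar logarithmic ``strange-term'' estimate, the factor $\ln(\eps/d\ke)/(2\pi)$ being the reciprocal of the two-dimensional capacity of the condenser $B\ke\subset R\ke$.)
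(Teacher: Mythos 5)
Your proposal is correct and follows essentially the same route as the paper: reduction to smooth functions, polar coordinates centred at the centre of $B\ke$, the fundamental theorem of calculus along radial rays, and the Cauchy--Schwarz inequality with the weight $r$ producing the factor $\ln(\eps/d\ke)$. The only cosmetic difference is that you apply Jensen in the angular variable before Cauchy--Schwarz in the radial one, whereas the paper combines the two in a single step; the resulting estimate and constant are identical.
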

	
	\begin{proof}
		Evidently, it is enough to prove \eqref{lemma:3:est} for $f\in C^1(\overline{R\ke\setminus B\ke})$.
		We introduce the polar coordinate system $(r,\phi)$ with the pole at 
		the center of $B\ke$; here $r>0$ stands for the distance to the pole and 
		$\phi\in [0,2\pi)$ is the angular coordinate. One has
		\begin{align*}
			\la f\ra_{\partial R\ke} - \la f\ra_{\partial B\ke}&=
			(2\pi)^{-1}\left(\int_{0}^{2\pi}f(\eps,\phi)\dd\phi-\int_{0}^{2\pi}f(  d\ke,\phi)\dd\phi\right)\\
			&=(2\pi)^{-1}
			\int_{0}^{2\pi}\int_{d\ke}^{\eps}\frac{\partial  {f}}{\partial r}(\tau,\phi)\dd\tau\dd\phi,
		\end{align*}
		whence, using the Cauchy-Schwarz inequality, we deduce
		\begin{align}\notag
			\big|\la f\ra_{\partial R\ke} - \la f\ra_{\partial B\ke}\big|^2&\leq
			(2\pi)^{-1}
			\int_{0}^{2\pi}\bigg(\int_{d\ke}^{\eps}\left|\frac{\partial  {f}}{\partial r}(\tau,\phi)\right|^2\tau\dd\tau\bigg)
			\cdot \bigg( \int_{d\ke}^{\eps}\tau^{-1}\dd\tau\bigg){\dd \phi}\\
			&\leq
			(2\pi)^{-1}\|\nabla {f}\|^2_{L^2(R\ke\setminus  {B\ke})}
			\ln\left(\tfrac{\eps}{d\ke}\right) .\qedhere \notag
		\end{align}
	\end{proof}
	
	\begin{lem}\label{lemma:4}
		One has:
		\begin{align}\label{lemma:4:est1}
			\forall\, f\in H^1(\wt Y\ke)\colon&\quad 
			\big|\la  f \ra _{R\ke} - \la  f\ra _{\wt Y\ke}\big|^2 \le \frac{9}{\Lambda_{\rm N}(\YY) \pi}\|\nb f\|^2_{L^2(\wt Y\ke)},
			\\
			\label{lemma:4:est2}
			\forall\, f\in H^1(\wt Y\ke)\colon&\quad 
			\big|\la  f \ra _{Y\ke} - \la  f\ra _{\wt Y\ke}\big|^2 \le 
			\frac{9}{\Lambda_{\rm N}(\YY) }\|\nb f\|^2_{L^2(\wt Y\ke)}.
		\end{align}
	\end{lem}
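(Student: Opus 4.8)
The plan is to reduce both estimates in Lemma~\ref{lemma:4} to the Poincar\'e-type inequality \eqref{Neumann:Edelta} applied on the enlarged cell $\wt Y\ke$, which is congruent to $3\eps\YY$. First I would observe, exactly as in the proofs of Lemmata~\ref{lemma:1} and \ref{lemma:2}, that for any open subset $\omega\subset\wt Y\ke$ and any $f\in H^1(\wt Y\ke)$ one has the elementary inequality
\begin{gather*}
	\big|\la f\ra_\omega-\la f\ra_{\wt Y\ke}\big|^2
	=\big|\la f-\la f\ra_{\wt Y\ke}\ra_\omega\big|^2
	\le |\omega|^{-1}\|f-\la f\ra_{\wt Y\ke}\|^2_{L^2(\omega)}
	\le |\omega|^{-1}\|f-\la f\ra_{\wt Y\ke}\|^2_{L^2(\wt Y\ke)},
\end{gather*}
using the Cauchy--Schwarz inequality and $\omega\subset\wt Y\ke$. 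Then the function $g\ceq f-\la f\ra_{\wt Y\ke}$ has zero mean over $\wt Y\ke$, so \eqref{Neumann:Edelta} with $\Omega=\YY$, $\delta=3\eps$, $\Omega_\delta=\wt Y\ke$ gives $\|g\|^2_{L^2(\wt Y\ke)}\le(\Lambda_{\rm N}(\YY))^{-1}(3\eps)^2\|\nabla f\|^2_{L^2(\wt Y\ke)}$.

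It then remains to insert the relevant volumes. For \eqref{lemma:4:est1}, take $\omega=R\ke$, the disk of radius $\eps$, so $|R\ke|=\pi\eps^2$; combining the two displayed bounds yields
\begin{gather*}
	\big|\la f\ra_{R\ke}-\la f\ra_{\wt Y\ke}\big|^2
	\le \frac{1}{\pi\eps^2}\cdot\frac{9\eps^2}{\Lambda_{\rm N}(\YY)}\|\nabla f\|^2_{L^2(\wt Y\ke)}
	=\frac{9}{\Lambda_{\rm N}(\YY)\pi}\|\nabla f\|^2_{L^2(\wt Y\ke)},
\end{gather*}
which is precisely \eqref{lemma:4:est1}. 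For \eqref{lemma:4:est2}, take $\omega=Y\ke$, the unit cell scaled by $\eps$, so $|Y\ke|=\eps^2$; the same computation gives the factor $\eps^{-2}\cdot 9\eps^2(\Lambda_{\rm N}(\YY))^{-1}=9(\Lambda_{\rm N}(\YY))^{-1}$, which is \eqref{lemma:4:est2}.

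There is essentially no obstacle here: the lemma is a direct bookkeeping exercise combining the averaging trick with the scaled Poincar\'e inequality \eqref{Neumann:Edelta}, and the only points requiring minor care are the inclusions $R\ke\subset\wt Y\ke$ and $Y\ke\subset\wt Y\ke$ (both clear from \eqref{enclo} and the definition $\wt Y\ke=3\eps\YY+\eps k$) and the correct value of the rescaling factor $\delta=3\eps$, which produces the constant $9$ in both estimates.
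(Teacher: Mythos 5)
Your proposal is correct and coincides with the paper's own argument: the same mean-value/Cauchy--Schwarz reduction to $\|f-\la f\ra_{\wt Y\ke}\|_{L^2(\wt Y\ke)}$ followed by the scaled Poincar\'e inequality \eqref{Neumann:Edelta} with $\wt Y\ke\cong 3\eps\YY$, and the same bookkeeping with $|R\ke|=\pi\eps^2$ and $|Y\ke|=\eps^2$ producing the constants $\frac{9}{\Lambda_{\rm N}(\YY)\pi}$ and $\frac{9}{\Lambda_{\rm N}(\YY)}$.
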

	
	\begin{proof}
		Using \eqref{Neumann:Edelta}, $\wt Y\ke\cong 3\mathbf{Y}$,    
		and the Cauchy-Schwarz inequality, we obtain
		\begin{align*}
			\big|\la  f \ra _{  R\ke} - \la  f\ra _{\wt Y\ke}\big|^2
			&=
			\big|\la  f - \la  f\ra _{\wt Y\ke}\ra _{R\ke} \big|^2
			\le
			|R\ke|^{-1}\|f-\la  f\ra _{\wt Y\ke}\|^2_{L^2(R\ke)}
			\\
			&\leq 
			|R\ke|^{-1}\|f-\la  f\ra _{\wt Y\ke}\|^2_{L^2(\wt Y\ke)}
			\le  9|R\ke|^{-1}(\Lambda_{\rm N}(\YY))^{-1}\eps^2\|\nabla f\|^2_{L^2(\wt Y\ke)}
			\\
			&=
			9\pi^{-1}(\Lambda_{\rm N}(\YY))^{-1}\|\nabla f\|^2_{L^2(\wt Y\ke)}.
		\end{align*} 
		The proof of \eqref{lemma:4:est2} is similar.
	\end{proof}
	
	Now, we are in position to formulate the first key estimate.
	
	\begin{lem}\label{lemma:5}
		One has:
		\begin{equation}\label{lemma:5:est}
			\forall\, f\in H^1(\wt Y\ke)\colon\quad 
			\big|\la  f \ra _{Y\ke} - \la  f\ra _{D\ke}\big|^2 \le 
			C_1  \ln\left( \tfrac{\eps}{d\ke}\right) \|\nb f\|^2_{L^2(\wt Y\ke)},
		\end{equation}
		where the constant $C_1>0$ is given by
		\begin{multline}\label{C1} 
			C_1=\frac{12}{\ln 2}\Bigg(
			\frac{1}{\Lambda_{\rm N}(\BB)\pi\rho^2} +
			\frac{(C_{\rm tr}(\BB))^2((\Lambda_{\rm N}(\BB))^{-1}+1)}{\pi}+\frac{9}{\Lambda_{\rm N}(\YY) \pi}+\frac{9}{\Lambda_{\rm N}(\YY)  }
			\Bigg)+\frac3\pi.
		\end{multline}
	\end{lem}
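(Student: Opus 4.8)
The plan is to telescope the difference $\la f\ra_{Y\ke}-\la f\ra_{D\ke}$ along the chain of nested sets $D\ke\subset B\ke\subset R\ke\subset\wt Y\ke$ from~\eqref{enclo}, inserting the two boundary circles $\partial B\ke$ and $\partial R\ke$, and then to apply Lemmata~\ref{lemma:1}--\ref{lemma:4} gap by gap. Concretely, for $f\in H^1(\wt Y\ke)$ I would write
\begin{align*}
\la f\ra_{Y\ke}-\la f\ra_{D\ke}
&=\bigl(\la f\ra_{Y\ke}-\la f\ra_{\wt Y\ke}\bigr)
+\bigl(\la f\ra_{\wt Y\ke}-\la f\ra_{R\ke}\bigr)
+\bigl(\la f\ra_{R\ke}-\la f\ra_{\partial R\ke}\bigr)\\
&\quad+\bigl(\la f\ra_{\partial R\ke}-\la f\ra_{\partial B\ke}\bigr)
+\bigl(\la f\ra_{\partial B\ke}-\la f\ra_{B\ke}\bigr)
+\bigl(\la f\ra_{B\ke}-\la f\ra_{D\ke}\bigr).
\end{align*}
This decomposition is legitimate because the restriction of $f$ to each of $B\ke$, $R\ke$, $R\ke\setminus B\ke$ lies in the corresponding $H^1$-space (these sets are contained in $\wt Y\ke$), so $f$ possesses traces on the circles $\partial B\ke$ and $\partial R\ke$.

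Next I would apply the elementary bound $\bigl(\sum_{j=1}^{6}a_j\bigr)^2\le 6\sum_{j=1}^{6}a_j^2$ and estimate the six squared summands using, in order, \eqref{lemma:4:est2}, \eqref{lemma:4:est1}, \eqref{lemma:1:est3}, \eqref{lemma:3:est}, \eqref{lemma:1:est2}, \eqref{lemma:2:est}. Since $B\ke$, $R\ke$ and $R\ke\setminus B\ke$ are all subsets of $\wt Y\ke$, every gradient norm on the right-hand sides is at most $\|\nb f\|^2_{L^2(\wt Y\ke)}$. Collecting the constants and merging the two equal $C_{\rm tr}(\BB)$-contributions coming from~\eqref{lemma:1:est2}--\eqref{lemma:1:est3}, this produces
\begin{multline*}
\bigl|\la f\ra_{Y\ke}-\la f\ra_{D\ke}\bigr|^2\le
6\Biggl(\frac{1}{\Lambda_{\rm N}(\BB)\pi\rho^2}
+\frac{(C_{\rm tr}(\BB))^2((\Lambda_{\rm N}(\BB))^{-1}+1)}{\pi}
+\frac{9}{\Lambda_{\rm N}(\YY)\pi}+\frac{9}{\Lambda_{\rm N}(\YY)}\Biggr)\|\nb f\|^2_{L^2(\wt Y\ke)}\\
+\frac{3}{\pi}\,\ln\bigl(\tfrac{\eps}{d\ke}\bigr)\,\|\nb f\|^2_{L^2(\wt Y\ke)}.
\end{multline*}

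Finally I would convert the first (constant) term into a multiple of $\ln(\eps/d\ke)$ by invoking~\eqref{22}: from $d\ke\le\eps/\sqrt2$ one gets $\ln(\eps/d\ke)\ge\tfrac12\ln 2$, hence $1\le\tfrac{2}{\ln 2}\ln(\eps/d\ke)$. Plugging this into the displayed estimate replaces the prefactor $6(\cdots)$ by $\tfrac{12}{\ln 2}(\cdots)\ln(\eps/d\ke)$, and together with the remaining $\tfrac{3}{\pi}$-term this is exactly $C_1\ln(\eps/d\ke)\|\nb f\|^2_{L^2(\wt Y\ke)}$ with $C_1$ as in~\eqref{C1}. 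The whole argument is essentially bookkeeping: the only two points that require a bit of attention are choosing the telescoping chain so that each consecutive gap is covered by precisely one of Lemmata~\ref{lemma:1}--\ref{lemma:4}, and the concluding use of the geometric inequality~\eqref{22} to absorb the $\eps$-independent constants into the logarithm. I do not anticipate a genuine obstacle here.
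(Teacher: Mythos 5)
Your proposal is correct and follows essentially the same route as the paper: the same telescoping through $\wt Y\ke$, $R\ke$, $\partial R\ke$, $\partial B\ke$, $B\ke$, the same application of Lemmata~\ref{lemma:1}--\ref{lemma:4} to the six squared gaps with the factor $6$, and the same absorption of the $\eps$-independent constants into the logarithm via $\ln(\eps/d\ke)\ge\tfrac12\ln 2$ from~\eqref{22}. The constants you collect reproduce $C_1$ in~\eqref{C1} exactly, so there is nothing to add.
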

	
	\begin{proof}
		One has:
		\begin{align}\notag
			\big|\la  f \ra _{Y\ke} - \la  f\ra _{D\ke}\big|^2&\leq
			6\bigg(\big|\la  f \ra _{Y\ke} - \la  f\ra _{\wt Y\ke}\big|^2+
			\big|\la  f \ra _{\wt Y\ke} - \la  f\ra _{R\ke}\big|^2
			\\\notag
			&\qquad+
			\big|\la  f \ra _{R\ke} - \la  f\ra _{\partial R\ke}\big|^2
			+
			\big|\la  f \ra _{\partial R\ke} - \la  f\ra _{\partial B\ke}\big|^2
			\\
			&
			\qquad\qquad+
			\big|\la  f \ra _{\partial B\ke} - \la  f\ra _{B\ke}\big|^2+
			\big|\la  f \ra _{B\ke} - \la  f\ra _{D\ke}\big|^2\bigg)\label{6terms}
		\end{align}
		Combining Lemmata~\ref{lemma:1}--\ref{lemma:4} 
		and taking into account \eqref{enclo} and 
		$\ln(\eps/d\ke) \ge \frac12{\ln 2}$ (cf.~\eqref{22}),
		we infer from \eqref{6terms} the desired estimate \eqref{lemma:5:est}.
	\end{proof}
	
	The second key estimate is given below.
	
	\begin{lem}\label{lemma:6}
		One has
		\begin{equation}\label{lemma:6:est}
			\forall\, f\!\in\! H^1(R\ke)\colon\	\|f\|_{L^2(D\ke)}^2 \le C_2\left\{\left(\frac{d\ke }{\eps }\right)^2\|f\|^2_{L^2(R\ke)}\! +\! d\ke^2
			\ln\left(\tfrac{\eps}{d\ke}\right) \|\nb f\|^2_{L^2(R\ke)}\right\},
		\end{equation}
		where 
		\begin{gather}\label{C2}
			C_2=\frac{\max\big\{2(C_{\rm tr}(\BB))^2,\,2+ 2(\ln 2)^{-1}(1
				+2(C_{\rm tr}(\BB))^2 )\big\}}{\Lambda^1_{\rm R}(\BB)}.
		\end{gather}
	\end{lem}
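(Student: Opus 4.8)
The plan is to reduce the left-hand side to a norm over the disk $B\ke$ and then to invoke the scaled Robin inequality. Since $D\ke\subset B\ke$ by~\eqref{enclo}, one has $\|f\|_{L^2(D\ke)}\le\|f\|_{L^2(B\ke)}$, so it suffices to estimate $\|f\|^2_{L^2(B\ke)}$. Because $B\ke\cong d\ke\BB$, inequality~\eqref{Robin:Edelta} applied with $\Omega=\BB$, $\delta=d\ke$ and Robin parameter $1$ gives
\[
\|f\|^2_{L^2(B\ke)}\le\big(\Lambda^1_{\rm R}(\BB)\big)^{-1}\Big(d\ke\|f\|^2_{L^2(\partial B\ke)}+d\ke^2\|\nabla f\|^2_{L^2(B\ke)}\Big).
\]
The last term is already harmless: bounding $\|\nabla f\|_{L^2(B\ke)}\le\|\nabla f\|_{L^2(R\ke)}$ and using $d\ke\le\eps/\sqrt2$ from~\eqref{22}, i.e.\ $1\le\tfrac{2}{\ln 2}\ln(\eps/d\ke)$, turns it into a multiple of $d\ke^2\ln(\eps/d\ke)\|\nabla f\|^2_{L^2(R\ke)}$. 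So the whole problem is to control the interior trace $d\ke\|f\|^2_{L^2(\partial B\ke)}$.

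The crucial idea — and the step where one must resist the tempting but circular move of applying a local trace inequality on $B\ke$ again — is to estimate $\|f\|_{L^2(\partial B\ke)}$ by integrating \emph{outward} from the circle $\partial B\ke$ to the outer circle $\partial R\ke$, exactly as in the proof of Lemma~\ref{lemma:3}. By density one may take $f\in C^1(\overline{R\ke})$; writing, in polar coordinates $(r,\phi)$ centred at the common centre of $B\ke$ and $R\ke$, $f(d\ke,\phi)=f(\eps,\phi)-\int_{d\ke}^{\eps}\partial_r f(\tau,\phi)\,\dd\tau$, squaring, and applying the Cauchy--Schwarz inequality with the weight $\tau\,\dd\tau$ as in Lemma~\ref{lemma:3}, one obtains after integrating in $\phi$
\[
\|f\|^2_{L^2(\partial B\ke)}\le\frac{2d\ke}{\eps}\|f\|^2_{L^2(\partial R\ke)}+2d\ke\ln\!\Big(\tfrac{\eps}{d\ke}\Big)\|\nabla f\|^2_{L^2(R\ke)};
\]
this is precisely where the factor $\ln(\eps/d\ke)$ is manufactured. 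The remaining boundary term $\|f\|^2_{L^2(\partial R\ke)}$ lives on the \emph{outer} boundary of $R\ke\cong\eps\BB$, so the scaled trace inequality~\eqref{trace:Edelta} (with $\Omega=\BB$, $\delta=\eps$) bounds it by $(C_{\rm tr}(\BB))^2\big(\eps^{-1}\|f\|^2_{L^2(R\ke)}+\eps\|\nabla f\|^2_{L^2(R\ke)}\big)$; after multiplication by $2d\ke^2/\eps$ the first summand becomes a multiple of $\tfrac{d\ke^2}{\eps^2}\|f\|^2_{L^2(R\ke)}$ and the second is absorbed, once more via $1\le\tfrac{2}{\ln 2}\ln(\eps/d\ke)$, into $d\ke^2\ln(\eps/d\ke)\|\nabla f\|^2_{L^2(R\ke)}$.

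Collecting terms, the coefficient in front of $\tfrac{d\ke^2}{\eps^2}\|f\|^2_{L^2(R\ke)}$ comes out as $2(C_{\rm tr}(\BB))^2$, and the coefficient in front of $d\ke^2\ln(\eps/d\ke)\|\nabla f\|^2_{L^2(R\ke)}$ comes out as $\tfrac{4(C_{\rm tr}(\BB))^2}{\ln 2}+2+\tfrac{2}{\ln 2}=2+2(\ln 2)^{-1}(1+2(C_{\rm tr}(\BB))^2)$; dividing through by $\Lambda^1_{\rm R}(\BB)$ and bounding both coefficients by their maximum gives exactly the constant $C_2$ of~\eqref{C2}, which together with $\|f\|^2_{L^2(D\ke)}\le\|f\|^2_{L^2(B\ke)}$ yields~\eqref{lemma:6:est}. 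The only real obstacle is the conceptual point just highlighted: the interior trace on $\partial B\ke$ must be handled by radial integration out to $\partial R\ke$ (which produces the logarithm) rather than by a trace estimate confined to $B\ke$; everything after that is a matter of carefully tracking the constants so that they combine into the stated $C_2$.
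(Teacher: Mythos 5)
Your proposal is correct and follows essentially the same route as the paper: the scaled Robin inequality \eqref{Robin:Edelta} on $B\ke\cong d\ke\BB$ (restricted to $D\ke$), the radial fundamental-theorem-of-calculus estimate from $\partial B\ke$ out to $\partial R\ke$ producing the logarithm (the paper's \eqref{DR:est}), the scaled trace inequality \eqref{trace:Edelta} on $R\ke\cong\eps\BB$, and absorption of the log-free gradient terms via $\ln(\eps/d\ke)\ge\tfrac12\ln 2$. Your bookkeeping of the constants reproduces exactly the $C_2$ in \eqref{C2}, so there is nothing to add.
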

	
	\begin{proof}
		Using the estimate \eqref{Robin:Edelta} and the inclusion
		{$D_{k,\eps}\subset B_{k,\eps}$} we get:
		\begin{gather}\label{Robin:De}
			\forall\, f\in H^1(B\ke)\colon\quad 
			\|f\|^2_{L^2(D\ke)}\leq (\Lambda^1_{\rm R}(\BB))^{-1}
			\left(d\ke\|f\|^2_{L^2(\partial B\ke)}+d\ke^2\|\nabla f\|^2_{L^2(B\ke)}\right).
		\end{gather}
		
		Next, we prove the inequality
		\begin{equation}\hspace{-2mm}
			\label{DR:est}
			\forall f\!\in\! H^1(R\ke\!\setminus\!\, {B\ke})\colon\, \|f\|_{L^2(\partial B\ke)}^2\!\le \!
			2\left\{\frac{d\ke}{\eps }\|f\|^2_{L^2(\partial R\ke)} \!+\!d\ke \ln\left(\tfrac{\eps}{d\ke}\right) \|\nb f\|^2_{L^2(R\ke\!\setminus {B\ke})}\right\}\!.
		\end{equation}
		Evidently, it is enough to demonstrate \eqref{DR:est} for $f\in C^1(\overline{R\ke {\setminus B_{k,\eps}}})$. 
		We introduce the polar coordinate system $(r,\phi)\in [0,\infty)\times [0,2\pi)$ with the pole at 
		the center of $B\ke$.
		One has
		\begin{align}\notag
			\big|f(d\ke,\phi)\big|^2& = \left|f(\eps,\phi)-
			\int_{d\ke}^{\eps}\frac{\partial f}{\partial r}(\tau,\phi)\dd\tau\right|^2
			\\\notag
			&\leq 2\big|f(\eps,\phi)\big|^2+
			2\bigg(\int_{ d\ke}^{\eps}
			\left|\frac{\partial  {f}}{\partial r}(\tau,\phi)\right|^2\tau\dd\tau\bigg)\cdot \bigg(  \int_{d\ke}^{\eps}\tau^{-1}\dd\tau\bigg)
			\\\label{FTC}
			&\leq 2\big|f(\eps,\phi)\big|^2+
			2\,{ \ln\left(\tfrac{\eps}{d\ke}\right) }\int_{d\ke}^{\eps}\left|\nabla  {f}(\tau,\phi)\right|^2 {\tau}\dd\tau  .
		\end{align}
		Integrating \eqref{FTC} over $\phi$ and then multiplying by $ d\ke$, we arrive at \eqref{DR:est}.
		
		Finally,  using the estimate \eqref{trace:Edelta},
		we obtain
		\begin{gather}\label{trace:Re}
			\forall\, f\in H^1(R\ke)\colon\quad 
			\|f\|^2_{L^2(\partial R\ke)}\leq (C_{\rm tr}( {\BB}))^2 \left(\eps^{-1}\|f\|^2_{L^2( R\ke)}+\eps\|\nabla f\|^2_{L^2(R\ke)}\right).
		\end{gather} 
		The desired estimate \eqref{lemma:6:est} follows from \eqref{Robin:De},  \eqref{DR:est}, \eqref{trace:Re} and   $\ln(\eps/d\ke) \ge \frac12\ln 2$.
	\end{proof}

	\section{Proof of Theorem~\ref{thm:convergence1}}
	\label{sec:proof}
	
	In the following, we will use for an open set $\Omega\subset\dR^2$ the notation
	$\|\cdot\|_{L^2(\Omega)}$ for the norms in $L^2(\Omega;\dC)$, $L^2(\Omega;\dC^2)$ and $L^2(\Omega;\dC^{2\times 2})$ as no confusion can arise. The respective inner products are linear in the first entry and will be denoted by $(\cdot,\cdot)_{L^2(\Omega)}$. Thus, if $u=(u_1,u_2)\in L^2(\Omega;\dC^2)$ (that is $\nabla u\in L^2(\Omega;\dC^{2\times 2})$), then we have 
	\begin{align*}
		\|u\|_{L^2(\Omega)}^2&=\|u_1\|_{L^2(\Omega)}^2+\|u_2\|_{L^2(\Omega)}^2,\\
		\|\nabla  u\|_{L^2(\Omega)}^2&=\|\nabla u_1\|_{L^2(\Omega)}^2+\|\nabla u_2\|_{L^2(\Omega)}^2\\
		&=
		\|{\p_1} u_1\|_{L^2(\Omega)}^2+\|{\p_2} u_1\|_{L^2(\Omega)}^2+
		\|{\p_1} u_2\|_{L^2(\Omega)}^2+\|{\p_2} u_2\|_{L^2(\Omega)}^2.
	\end{align*}
	The same convention will be applied to   $H^1(\Omega;\dC)$ and $H^1(\Omega;\dC^2)$. The norm in the Sobolev spaces $H^1(\Omg;\dC)$ and $H^1(\Omg;\dC^2)$  is defined via the identity $\|u\|^2_{H^1(\Omg)} := \|\nb u\|^2_{L^2(\Omg)} + \|u\|^2_{L^2(\Omg)}$.\smallskip
	
	By the abstract scheme provided in Section~\ref{sec:abstract} it suffices to obtain a suitable upper bound on the absolute value of
	\[
	\frs_\eps[u,v] := (\sfD u,v)_{L^2(\dR^2)} - ( u,\sfD_\eps v)_{L^2(\dR^2)},
	\]
	valid for any $u,v\in H^1(\dR^2;\dC^2)$.
	Using the equality 
		$(\ii(\s\cdot\nabla) u,v)_{L^2(\dR^2)}=(u,\ii(\s\cdot\nabla)v)_{L^2(\dR^2)}$ and the definition of the matrix $\sigma_3$
	we can express $\frs_\eps[u,v]$ as
	\begin{align*}
		\frs_\eps[u,v] &\!=\! \sum_{k\in\dZ^2}\left(
		m_\star\int_{Y\ke} \la \s_3 u(x),v(x)\ra _{\dC^2}\dd x-
		\int_{D\ke}m\ke  \la  u(x),\s_3v(x)\ra _{\dC^2}\dd x\right)\\
		&\!=\!\sum_{k\in\dZ^2}\left(
			m_\star\int_{Y\ke}\!\! \big(u_1(x) \overline{v_1(x)} - u_2(x)  \overline{v_2(x)}\big)\dd x-
			\int_{D\ke}\!\! m\ke  \big(u_1(x) \overline{v_1(x)} - u_2(x)  \overline{v_2(x)}\big)\dd x\right)\!,
	\end{align*}
	whence, we get by triangle inequality
	\begin{equation}\label{eq:R}
		|\frs_\eps[u,v]| \le
		\sum_{j=1}^2 
		\left|m_\star\sum_{k\in\dZ^2}\int_{Y\ke} u_j(x)\ov{v_j(x)}\dd x-
		\sum_{k\in\dZ^2} \int_{D\ke}m\ke  u_j(x)\ov{v_j(x)}\dd x\right|,
	\end{equation}
	where we use the convention that $u = (u_1,u_2)$ and $v = (v_1,v_2)$.
	We will split the rest of the proof into two steps. 
	
	\subsection*{Step 1: Estimates in terms of $H^1$-norms}
	
	Let $f,g\in H^1(\dR^2)$ be arbitrary scalar functions.
	For any $k\in\dZ^2$, 
	there holds
	\begin{align}\notag
		&\left|\int_{D\ke}  f(x)\ov{g(x)}\dd x - \la  f\ra _{D\ke}\la  \ov{g}\ra _{D\ke}|D\ke|\right|\\\notag
		&\qquad=\left|\int_{D\ke}  \left(f(x)-\la  f\ra _{D\ke}\right)\left(\ov{g(x)}-\la \ov{g}\ra _{D\ke}\right)\dd x\right|\\
		&\qquad\le \| f - \la  f\ra _{D\ke}\|_{L^2(D\ke)}\|g-\la  g\ra _{D\ke}\|_{L^2(D\ke)}
		\leq \Lambda_{\rm N}^{-1} d\ke^2 \|\nb f\|_{L^2(D\ke)}\|\nb g\|_{L^2(D\ke)},\label{eq:bndD}
	\end{align}
	where in the last estimate we used the bound \eqref{Neumann:Edelta} and the assumption \eqref{assump:N}.
	Analogously, we obtain for any $k\in\dZ^2$ that
	\begin{equation}\label{eq:bndY}
		\left|\int_{Y\ke}  f(x)\ov{g(x)}\dd x - \la  f\ra _{Y\ke}\la  \ov{g}\ra _{Y\ke}|Y\ke|\right| \le
		(\Lambda_{\rm N}(\YY))^{-1}\eps^2\|\nb f\|_{L^2(Y\ke)}\|\nb g\|_{L^2(Y\ke)}.
	\end{equation}
	Now, using    \eqref{eq:bndD}, \eqref{eq:bndY},  the Cauchy-Schwarz inequality in $\ell^2(\dZ^2)$ and 
	\begin{gather}\label{mede}
		m\ke d\ke^2=\frac{m_\star\eps^2}{ |\DD\ke|}\leq \frac{m_\star\eps^2}{\pi\rho^2}
	\end{gather}
	(the above inequality follows from \eqref{assump:inrad}),
	we derive from~\eqref{eq:R}:
	\begin{align}\notag
		\big|\frs_\eps[u,v]\big| &\le \sum_{j=1}^2\left|\sum_{k\in\dZ^2}\left(m\ke\la  u_j\ra _{D\ke}\la \ov{v_j}\ra _{D\ke}|D\ke| -m_\star\la  u_j\ra _{Y\ke}\la \ov{v_j}\ra _{Y\ke}|Y\ke|\right)\right|\\
		\label{eq:R2}
		&\qquad +
		\eps^2 m_\star \left(\frac{1}{\Lambda_{\rm N}\pi\rho^2}+\frac{1}{\Lambda_{\rm N}(\YY)}\right)\sum_{j=1}^2 \|\nb u_j\|_{L^2(\dR^2)}\|\nb v_j\|_{L^2(\dR^2)}.  	
	\end{align}
	
	Our next aim is to estimate the expression 
	$$\sum_{k\in\dZ^2}\left(m\ke\la  f\ra _{D\ke}\la \ov{g}\ra _{D\ke}|D\ke| -m_\star\la  f\ra _{Y\ke}\la \ov{g}\ra _{Y\ke}|Y\ke|\right),$$
	where $f,g\in {H^1}(\dR^2)$ are arbitrary scalar functions.
	One has:
	\[
	\begin{aligned}
		&\left|\sum_{k\in\dZ^2}\left(m\ke\la  f\ra _{D\ke}\la \ov{g}\ra _{D\ke}|D\ke| -m_\star\la  f\ra _{Y\ke}\la \ov{g}\ra _{Y\ke}|Y\ke|\right)\right| \\
		&\qquad = \bigg|
		\sum_{k\in\dZ^2}\bigg(
		m\ke\la  f\ra _{Y\ke}\la \ov{g}\ra _{Y\ke}|D\ke|-m\ke\la  f\ra _{Y\ke}\la \ov{g}\ra _{Y\ke}|D\ke|\\
		&\qquad\qquad\qquad +	
		m\ke\la  f\ra _{D\ke}\la \ov{g}\ra _{Y\ke}|D\ke|-m\ke\la  f\ra _{D\ke}\la \ov{g}\ra _{Y\ke}|D\ke|\\
		&\qquad\qquad\qquad\qquad+
		m\ke\la  f\ra _{D\ke}\la \ov{g}\ra _{D\ke}|D\ke| -m_\star\la  f\ra _{Y\ke}\la \ov{g}\ra _{Y\ke}|Y\ke|\bigg)
		\bigg|\\
		&\qquad\le \underbrace{\left|m_\star\eps^2\sum_{k\in\dZ^2}
			\la  f\ra _{D\ke}\big(\la  \ov{g}\ra _{D\ke} - \la  \ov{g}\ra _{Y\ke}   \big)\right|}_{=:\sfQ_1^\eps}
		+\underbrace{\left| m_\star\eps^2 \sum_{k\in\dZ^2} 
			\big(\la  f\ra _{D\ke} - \la  f\ra _{Y\ke}\big)\la  \ov{g}\ra _{Y\ke}\right|}_{=:\sfQ_2^\eps},\\
	\end{aligned}	
	\]
	where we used in the derivation of the above bound that $m\ke|D\ke| = m_\star|Y\ke|=m_\star\eps^2$, thanks to which, in particular, the first and the last terms cancelled in the second step of the computation.
	We estimate the term $\sfQ_1^\eps$ by   the Cauchy-Schwarz inequality:
	\[
	|\sfQ_1^\eps| \le m_\star\eps^2\left(\sum_{k\in\dZ^2}\big|\la  \ov{g}\ra _{Y\ke} - \la \ov{g}\ra _{D\ke}
	\big|^2\right)^{1/2}\cdot\left(\sum_{k\in\dZ^2}|\la  f\ra _{D\ke}|^2\right)^{1/2}.
	\]
	By Lemma~\ref{lemma:5} we get that for any $k\in\dZ^2$
	\begin{equation}\label{lemma:4:est}
		\sum_{k\in\dZ^2}\big|\la  \ov{g}\ra _{Y\ke} - \la \ov{g}\ra _{D\ke}\big|^2 \le C_1\ln\left(\tfrac{\eps}{d\e}\right)\sum_{k\in\dZ^2}\|\nb g\|^2_{L^2(\wt Y\ke)}
	\end{equation}
	with $C_1>0$ defined by \eqref{C1}.
	Using Lemma~\ref{lemma:6}, assumption \eqref{assump:inrad} and the Cauchy-Schwarz inequality we get the following bound
	\begin{align}\notag
		\sum_{k\in\dZ^2}|\la  f\ra _{D\ke}|^2& \le |D\ke|^{-1}\|f\|^2_{L^2(D\ke)}\\
		\notag&\le 
		\sum_{k\in\dZ^2}\frac{C_2}{|\DD\ke|}
		\left\{
		\eps^{-2}\|f\|^2_{L^2(R\ke)} +   
		\ln\left(\tfrac{\eps}{d\ke}\right) \|\nb f\|^2_{L^2(R\ke)}	\right\}
		\\\label{eq:bnd1}
		&\le 
		\frac{C_2}{\pi \rho^2}\sum_{k\in\dZ^2}
		\left\{
		\eps^{-2} \|f\|^2_{L^2(R\ke)} + 
		\ln\left(\tfrac{\eps}{d\e}\right)  \|\nb f\|^2_{L^2(R\ke)}	\right\},
	\end{align}
	where $C_2>0$ is given in \eqref{C2}.
	Finally, we observe that   
	\begin{gather}
		\label{YY}
		\forall\, h\in L^2(\dR^2)\colon\quad \sum_{k\in \dZ^2}\|h\|^2_{L^2(R\ke)}\le
		\sum_{k\in \dZ^2}\|h\|^2_{L^2(\wt Y\ke)} = 9\sum_{k\in \dZ^2}\|h\|^2_{L^2(  Y\ke)} =9\|h\|^2_{L^2(\dR^2)} .
	\end{gather}
	It follows easily from \eqref{eq:assumption} that $\lim_{\eps\to 0}\eps^2\ln(\eps/d\e)=0$.
	Assuming further  that $\eps$ is sufficiently small in order to have 
	$\ln(\eps/d\e)\leq \eps^{-2}$,
	we conclude from~\eqref{lemma:4:est}--\eqref{YY} that
	\begin{equation}\label{eq:Q1}
		|\sfQ_1^\eps| \le 9m_\star\sqrt{\frac{C_1C_2}{\pi\rho^2}}
		\eps \left(\ln\left(\tfrac{\eps}{d\e}\right)\right)^{1/2}\|\nb g\|_{L^2(\dR^2)}\|f\|_{H^1(\dR^2)}.
	\end{equation}
	Analogously, we find that
	\begin{equation}\label{eq:Q2}
		|\sfQ_2^\eps| \le 3m_\star\sqrt{ {C_1} }\eps
		\left(\ln\left(\tfrac{\eps}{d_\eps}\right)\right)^{1/2}\|\nb f\|_{L^2(\dR^2)}\|g\|_{L^2(\dR^2)}.
	\end{equation}
	
	As a consequence of the estimates~\eqref{eq:R2},~\eqref{eq:Q1}, \eqref{eq:Q2} 
	and $\ln(\eps/d\ke) \ge \frac12{\ln 2}$ (cf.~\eqref{22}),
	we end up with
	the bound
	\begin{equation}\label{eq:R3}
		\big|\frs_\eps[u,v]\big| \le C_3\eps
		\left(\ln\left(\tfrac{\eps}{d_\eps}\right)\right)^{1/2}\|u\|_{ H^1(\dR^2 )}\|v\|_{ H^1(\dR^2 )},\quad \forall\, u,v\in H^1(\dR^2;\dC^2),
	\end{equation}
	with the constant $C_3> 0$ being given by
	\begin{gather}\label{C3}
		C_3=
		{18}m_\star\sqrt{\frac{C_1C_2}{\pi\rho^2}}+
		{6}m_\star\sqrt{ {C_1} }+
		{2}(\tfrac12\ln 2)^{-1/2} m_\star {\left(\frac{1}{\Lambda_{\rm N}\pi\rho^2}+\frac{1}{\Lambda_{\rm N}(\YY)}\right)},
	\end{gather}
	where $C_1,\,C_2$ are defined by \eqref{C1}, \eqref{C2}.
	\medskip
	
	\subsection*{Step 2: Estimate  in terms of graph norms}
	To apply Theorem~\ref{thm:abstract} we need to bound $H^1$-norms of $u$ and $v$ in \eqref{eq:R3} in terms of graph norms associated with operators $\sfD$ and $\sfD_\eps$.
	
	For any $u\in H^1(\dR^2;\dC^2)$ one can easily check
	via integration by parts that
	$$\|\sfD u\|^2_{L^2(\dR^2)} = \|\nb u\|^2_{L^2(\dR^2)} + m^2_\star\|
	u\|^2_{L^2(\dR^2)},$$
	whence 
	\begin{gather}\label{D:final}
		\|\sfD u\|^2_{L^2(\dR^2)} + \|u\|^2_{L^2(\dR^2)} \ge \|u\|^2_{H^1(\dR^2)}.
	\end{gather}

	The corresponding graph norm associated with the operator $\sfD_\eps$ is more subtle.
	We denote by $\nu_{D\ke}(x)$  the outer unit normal to $D\ke$ at the point $x$.
	For any $x\in \p D\ke$ we define  the mapping
	$\sfB\ke(x) := -\ii\s_3(\s\cdot\nu_{D\ke}(x))\colon\dC^2\arr\dC^2$, which is  self-adjoint  with eigenvalues $\pm 1$, and  the corresponding eigenprojections 
	$\sfP\ke^\pm(x) := \frac{1\pm \sfB\ke(x)}{2}$.
	One has the following equality for $v=(v_1,v_2)\in H^1(\dR^2;\dC^2)$:
	\begin{align}\notag
		\|\sfD_\eps v\|^2_{L^2(\dR^2)} 
		&=
		\|\nb v\|^2_{L^2(\dR^2)} + 
		\sum_{k\in\dZ^2}\int_{D\ke}m\ke^2|v|^2\dd x 
		\\
		&+ 
		\sum_{k\in\dZ^2}\int_{\p D\ke} {m\ke}|\sfP^+\ke v|^2\dd \s\, -
		\sum_{k\in\dZ^2}\int_{\p D\ke} {m\ke} |\sfP^-\ke v|^2\dd \s.\label{BCLS}
	\end{align}
	For $m\ke$ being the same for all $k$ the proof of \eqref{BCLS} can be found in \cite[p.~1885]{BCLS19} {(see also~\cite[Lemma 2]{SV19})}, for the case of different $m\ke$ the proof is absolutely the same.
	We also remark that by careful inspection of~\cite{BCLS19} one sees that this formula is still valid	if the boundaries of $D_{k,\eps}$ and $D_{k',\eps}$	for some $k,k'\in\dZ^2$, $k\ne k'$
	have a non-empty intersection.

	From~\eqref{BCLS} we derive easily
	the following estimate:
	\begin{align}
		\|\sfD_\eps v\|^2_{L^2(\dR^2)} 
		\!\ge\! \frac12\|\nb v\|^2_{L^2(\dR^2)} + 
		\frac12\sum_{k\in\dZ^2}
		\left[\|\nb v\|^2_{L^2(D\ke)} + 2m\ke\int_{\p D\ke}\left(|\sfP\ke^+v|^2-|\sfP\ke^-v|^2\right)\!\dd \sigma\right]\!.\label{eq:lowerbnd}
	\end{align}
	To proceed further we observe that due to \eqref{assump:tr}, \eqref{assump:N}, \eqref{est:LambdaS}, we get
	\begin{gather}\label{est:LambdaS+}
		\forall \eps\in (0,\eps_0]\ \forall k\in\dZ^2\colon\quad\frac{1}{\Lambda_{\rm S}(\DD\ke)} \leq 
		{(C_{\rm tr})^2} (1+\Lambda_{\rm N} ^{-1}).
	\end{gather}
	Furthermore, the assumption \eqref{eq:assumption} being combined with \eqref{assump:inrad} 
	implies 
	\begin{gather}\label{md0}
		{\sup_{k\in\dZ^2}}m\ke d\ke\to 0~\text{ as }~\eps\to 0.
	\end{gather}
	Finally, using the assumption \eqref{assump:tr} (with $u\equiv 1$) and taking into account that
	each $\DD\ke$ is contained in a unit disk, we get
	\begin{gather}\label{per:est}
		|\partial \DD\ke|\leq  {(C_{\tr })^2}|\DD\ke|.
	\end{gather}	
	Notice that for any fixed vector $\xi = (\xi_1,\xi_2)\in\dC^2$ and any $k\in\dZ^2$
	\begin{equation}\label{eq:xi}
		\begin{aligned}
			&\int_{\p D\ke}|\sfP\ke^+\xi|^2\dd \s 
			-		\int_{\p D\ke}|\sfP\ke^-\xi|^2\dd \s \\
			&\qquad\qquad=  2\Re\int_{\p D\ke}\big(-\ii\s_3(\s\cdot\nu_{D\ke}(x))\xi\big)
			\cdot\ov{\xi}\dd \s(x)  = 0,
		\end{aligned}	
	\end{equation}
	where we used that $\int_{\p D\ke}\nu_{D\ke}(x)\dd\s(x) = 0$ in the last step.  
	Taking into account that $|\sfP^\pm\ke v|\le |v|$,
	we deduce from~\eqref{per:est} and~\eqref{eq:xi} using the inequality
	$2xy \le t^{-1} x^2+ ty^2$ ($x,y,t > 0$)
	that for any $v\in H^1(\dR^2;\dC^2)$ and any fixed constant $\aa > 0$ and all $k\in\dZ^2$
	\begin{align}
		\notag&m\ke\int_{\p D\ke}\left(|\sfP\ke^+v|^2-|\sfP\ke^-v|^2\right)\dd \sigma \\
		\notag&\qquad= 
		m\ke\int_{\p D\ke}\left(|\sfP\ke^+(v-\langle v\rangle_{D\ke} + \langle v\rangle_{D\ke})|^2-|\sfP\ke^-(v-\langle v\rangle_{D\ke} + \langle v\rangle_{D\ke})|^2\right)\dd \sigma\\
		\notag&\qquad\ge-m\ke\int_{\p D\ke}|v-\langle v\rangle_{D\ke}|^2\dd\s
		-4m\ke\int_{\p D\ke}|v-\langle v\rangle_{D\ke}|\cdot|\langle v\rangle_{D\ke}|\dd\s\\
		\notag&\qquad\ge -\left(m\ke +\frac{1}{\aa d\ke}\right)\int_{\p D\ke}|v-\langle v\rangle_{D\ke}|^2\dd\s
		- 4m\ke^2 d\ke \aa|\p D\ke|\cdot |\langle v\rangle_{ D\ke}|^2\\
		\notag&\qquad\ge -\left(m\ke +\frac{1}{\aa d\ke}\right)\int_{\p D\ke}|v-\langle v\rangle_{D\ke}|^2\dd\s
		- 4m\ke^2\aa\frac{|\p {\bf D}\ke|}{|{\bf D}_{\ke}|} \|v\|_{L^2(D\ke)}^2\\
		&\qquad	
		\ge -\left(m\ke +\frac{1}{\aa d\ke}\right)\int_{\p D\ke}|v-\langle v\rangle_{D\ke}|^2\dd\s
		- 4m\ke^2\aa(C_{\rm tr})^2 \|v\|_{L^2(D\ke)}^2,
		\label{eq:bterm}
	\end{align}
	where for $v=(v_1,v_2):\dR^2\to\dC^2$ we denote $\langle v\rangle_{ D\ke}\coloneqq (\langle v_1\rangle_{ D\ke},\langle v_2\rangle_{ D\ke})\in\dC^2$.
	In view of~\eqref{md0}, by choosing the value of the constant $\aa > 0$ sufficiently large we can fulfil the conditions
	\begin{equation}\label{eq:conditiona}
		\begin{aligned}	
			\sup_{k\in\dZ^2}\left(4m\ke d\ke +\frac{4}{\aa}\right) < \frac14\left(1+\Lambda_{\rm N}^{-1}\right)^{-1} (C_{\rm tr})^{-2},\\ 
			\sup_{k\in\dZ^2}\left(4m\ke d\ke +\frac{4}{\aa}\right) < \frac14\Lambda_{\rm N} (C_{\rm tr})^{-2},
		\end{aligned}	
	\end{equation}
	for all sufficiently small $\eps > 0$.
	For this choice of $\aa > 0$, we get for all sufficiently small $\eps > 0$ and any $k\in\dZ^2$\allowdisplaybreaks
	\begin{align}
		\notag&\frac12\|\nb v\|^2_{L^2(D\ke)}
		-\left(m\ke +\frac{1}{\aa d\ke}\right)\int_{\p D\ke}|v-\langle v\rangle_{D\ke}|^2\dd\s\\
		\notag&
		 \qquad =\frac14\|\nb (v-\langle v\rangle_{ D\ke})\|^2_{L^2(D\ke)}
		\\\notag
		&\qquad\qquad+\frac14\left[\|\nb (v-\langle v\rangle_{ D\ke})\|^2_{L^2(D\ke)}
			-\left(4m\ke +\frac{4}{\aa d\ke}\right)\int_{\p D\ke}|v-\langle v\rangle_{D\ke}|^2\dd\s\right]\\
		\notag&\qquad \ge 
			\frac{1}{4d\ke^2}\Lambda_{\rm N}\|v-\langle v\rangle_{D\ke}\|^2_{L^2(D\ke)}
			+
			\frac{1}{4d\ke^2}\Lambda_{\rm R}^{-4m\ke d\ke - \frac{4}{\aa}}({\bf D}\ke)\|v-\langle v\rangle_{D\ke}\|^2_{L^2(D\ke)}\\
		\notag&\qquad
		\ge
		\frac{1}{4d\ke^2}
		\left(\Lambda_{\rm N} - \frac{\left(4m\ke d\ke +\frac{4}{\aa}\right)|\p {\bf D}\ke|}{|{\bf D}\ke|}\left(1-\sqrt{\frac{4m\ke d\ke+\frac{4}{\aa}}{\Lambda_{\rm S}({\bf D}\ke)}}\right)^{-2} \right)\|v-\langle v\rangle_{D\ke}\|^2_{L^2(D\ke)}\\
		&\qquad
		\ge
		\frac{1}{4d\ke^2}
		\left(\Lambda_{\rm N} - 4\left(4m\ke d\ke +\frac{4}{\aa}\right)(C_{\rm tr})^2\right)\|v-\langle v\rangle_{D\ke}\|^2_{L^2(D\ke)}\ge 0,\label{eq:nonnegative}		
	\end{align}
	where we used that the gradient of a constant function is zero in the first step, min-max characterisations~\eqref{minmax:N},~\eqref{minmax:R}, assumption~\eqref{assump:N}, and scaling properties~\eqref{muEdelta} in the second step, 
	combined
	the first condition in~\eqref{eq:conditiona}
	with
	the estimates~\eqref{est:LambdaS+}, \eqref{per:est} and the bound \eqref{eq:Robin_weak} in the third and the fourth steps, and finally employed the second condition in~\eqref{eq:conditiona} in the last step.

	Then, we can extend \eqref{eq:lowerbnd} as follows,
	\begin{align}\notag
		\|\sfD_\eps v\|^2_{L^2(\dR^2)} 
		&\geq \frac12\|\nb v\|^2_{L^2(\dR^2)}
		-  4(C_{\rm tr})^2\aa\sum_{k\in\dZ^2} m\ke^2\|v\|^2_{L^2(D\ke)}\\
		\notag&\ge 
		\frac{1}{2}\|\nb v\|^2_{L^2(\dR^2)}
		-\frac{4m_\star^2  (C_{\rm tr})^2\aa C_2}{|{\bf D}\ke|^2}  \sum_{k\in\dZ^2}
		\left\{\frac{\eps^2}{d\ke^2}\|v\|^2_{L^2(R\ke)}\!+\!
		\frac{\eps^4 \ln\left(\frac{\eps}{d\ke}\right) }{d\ke^2}
		\|\nabla v\|^2_{L^2(R\ke)}\right\}\\
		&\ge 
		\frac{1}{2}\|\nb v\|^2_{L^2(\dR^2)}\!
		-\!\frac{36  (m_\star C_{\rm tr})^2\aa C_2}{(\pi\rho^2)^2}  		\left\{\frac{\eps^2}{d\e^2}\|v\|^2_{L^2(\dR^2)}\!+\!
		\frac{\eps^4 \ln\big(\frac{\eps}{d\e}\big) }{d\e^2}
		\|\nabla v\|^2_{L^2(\dR^2)}\right\}\!,	
		\label{eq:lowerbnd+}
	\end{align}
	where in the first step we employ ~\eqref{eq:bterm} and~\eqref{eq:nonnegative}, in the second step
	we apply Lemma~\ref{lemma:6},
	and in the last step we use~\eqref{assump:de},~\eqref{assump:inrad}, and \eqref{YY}.
	Thus, we arrive at the estimate
	\begin{gather}\label{eq:lowerbnd++}
		\|\sfD_\eps v\|^2_{L^2(\dR^2)} 
		\ge
		\left(\frac12 - C_4\frac{\eps^4 \ln\left(\frac{\eps}{d_\eps}\right) }{d_\eps^2}\right)\|\nb v\|^2_{ L^2(\dR^2) }
		- C_4\frac{\eps^2}{d_\eps^2}\|v\|_{L^2(\dR^2)}^2, 
	\end{gather}
	where 
	\begin{gather}\label{C4}
		C_4=\frac{36{(m_\star C_{\rm tr})^2\aa C_2}}{ (\pi\rho^2)^2}.
	\end{gather}
	In the following, we assume that $\eps$ is sufficiently small in order to have (cf.~\eqref{eq:assumption})
	\begin{gather}\label{14a}
		C_4\frac{\eps^4 \ln\left(\frac{\eps} {d_\eps}\right) }{d_\eps^2}\le \frac14. 
	\end{gather}
	Combining \eqref{eq:lowerbnd++} and \eqref{14a} we arrive at
	the final estimate
	\begin{gather}\label{De:final}
		\|\sfD_\eps v\|^2_{L^2(\dR^2)} +\left(C_4\frac{\eps^2}{d_\eps^2}+\frac14\right)\|v\|^2_{L^2(\dR^2)} \ge
		\frac14\|v\|^2_{H^1(\dR^2)}.
	\end{gather}
	
	\subsection*{End of the proof}
	It  follows from \eqref{eq:R3}, \eqref{D:final}, \eqref{De:final} that  for  sufficiently small $\eps$
	one has the estimate
	\[
	\big|\frs_\eps[u,v]\big| \le 2C_3\eps\big(\ln(\tfrac{\eps}{ d_\eps})\big)^\frac12
	\left(\|\sfD u\|^2_{L^2(\dR^2)}\! +\! \|u\|^2_{L^2(\dR^2)}\right)^{\frac12}\left(\|\sfD_\eps v\|^2_{L^2(\dR^2)} \!+\! \left(C_4\frac{\eps^2}{d_\eps^2}+\frac14\right)\|v\|^2_{L^2(\dR^2)}\right)^{\frac12}.
	\]
	Applying Theorem~\ref{thm:abstract}  (with
	$a = 1$, $b = C_4\frac{\eps^2}{d_\eps^2}+\frac14$, $c = 2C_3\eps(\ln(\frac{\eps}{d_\eps}))^{1/2}$) and taking into account \eqref{22} we get  
	\[
	\big\|(\sfD-\ii)^{-1} - (\sfD_\eps - \ii)^{-1}\big\|\le C\frac{\eps^2}{d\e}\left(\ln\left(\frac{\eps}{d_\eps}\right)\right)^\frac12,
	\]
	where
	$
		C=2C_3 \sqrt{2C_4 + \frac{5}{4}}
		$ with $C_3,C_4$ being defined via \eqref{C3},\,\eqref{C4} (the constants $C_1$, $C_2$ standing in the formulae for $C_3$, $C_4$ are given in \eqref{C1}, \eqref{C2}).
	In particular, the family of operators $\sfD_\eps$ converges to $\sfD$ in the norm resolvent sense as $\eps\arr0$.
	
	\section*{Acknowledgments}
	The authors are grateful to V\'it Jakubsk\'y   for fruitful discussions on physical aspects of the investigated model. A.\,K. is  supported by the  Czech Science Foundation (GA\v{C}R) within the project 22-18739S.  V.\,L. is  supported by the  Czech Science Foundation (GA\v{C}R) within the project 21-07129S.   
	
	\newcommand{\etalchar}[1]{$^{#1}$}
		
\end{document}